\documentclass[11pt,a4paper,twoside,reqno]{amsart}

\usepackage{amssymb}
\usepackage{latexsym}


\usepackage{graphicx}
\usepackage{a4wide}

\usepackage{amsmath}
\usepackage{amsthm}

\usepackage{color}
\usepackage{caption}
\usepackage{subcaption}
\usepackage{hyperref}
\usepackage{multirow}
\usepackage{booktabs}


\newtheorem{theorem}{Theorem}
\newtheorem{definition}{Definition}
\newtheorem{lemma}{Lemma}
\newtheorem{remark}{Remark}

\newcommand{\norm}[1]{\left\| #1 \right\|}

\newcommand{\pt}{\partial_t}
\newcommand{\ptt}{\partial^2_{tt}}

\newcommand{\bx}{\bar{x}}
\newcommand{\bt}{\bar{t}}
\newcommand{\tx}{\tilde{x}}

\renewcommand{\i}{\ifmmode\mathit{\mathchar"7010 }\else\char"10 \fi}
\renewcommand{\j}{\ifmmode\mathit{\mathchar"7011 }\else\char"11 \fi}
\newcommand{\R}{\mathbb{R}}
\newcommand{\N}{\mathbb{N}}

\newcommand{\vfi}{\varphi}

\begin{document}\large

\title[A fast-convolution based space-time Chebyshev spectral method for peridynamic models]{A fast-convolution based space-time Chebyshev spectral method for peridynamic models}

\author[L. Lopez]{Luciano Lopez}

\author[S. F. Pellegrino]{Sabrina Francesca Pellegrino
}

\address[L. Lopez]{Dipartimento di Matematica, Universit\`a degli Studi di Bari Aldo Moro, via E. Orabona 4, 70125 Bari, Italy}
\email{luciano.lopez@uniba.it}

\address[S. F. Pellegrino]{Dipartimento di Management, Finanza e Tecnologia, Universit\`a LUM Giuseppe Degennaro, S.S. 100 Km 18 - 70010 Casamassima (BA), Italy}
\email{pellegrino@lum.it}

\begin{abstract}

Peridynamics is a nonlocal generalization of continuum mechanics theory which adresses discontinuous problems without using partial derivatives and replacing its by an integral operator. As a consequence, it finds applications in the framework of the development and evolution of fractures and damages in elastic materials.

In this paper we consider a one-dimensional nonlinear model of peridynamics and propose a suitable two-dimensional fast-convolution spectral method based on Chebyshev polynomials to solve the model. This choice allows us to gain the same accuracy both in space and time. We show the convergence of the method and perform several simulations to study the performance of the spectral scheme.


\end{abstract}

\maketitle

{\bf{\textit{Keywords.}}} nonlinear peridynamics, Chebyshev spectral methods, Chebyshev polynomials, convolution product, Fast Fourier Transform.




\section{Introduction}
\label{sec:intro}

In the framework of continuum mechanic theory, peridynamics is a nonlocal version of elasticity model able to describe the formation and the evolution of fractures and damages in elastic materials. It was introduced by Silling in~\cite{Silling_2000} and consists in a second order in time partial integro-differential equation.

The main capability of the model is that it avoids the use of partial derivatives in space, so it can address discontinuous problems~\cite{Delia2017,DSDPR,DELIA2021,Pellegrino,alebrahim,Alebrahim2021,acoclite,CoclitePellegrino}.

Most standard approaches used to approximate the solution of the peridynamic equation make use of meshfree methods with the Gauss two points quadrature (see~\cite{Emmrich_Weckner_2007,berardidifonzo}) and finite difference methods with St\"ormer-Verlet scheme (see~\cite{Galvanetto2018}). 

Both methods need $\mathcal{O}(N^2)$ cost per time step and perform well when the nonlocality covers a small portion of the domain (see~\cite{Madenci2010}). In particular, in~\cite{CFLMP,Pellegrino2020}, the authors make a survey of the most implemented numerical methods in the peridynamic framework and propose a different approach based on spectral techniques.

Spectral methods are an important tool for the numerical solution of many applied problems, as they allow to have high-order accuracy for smooth problems. They consist in reformulate the original problem in the frequency space, by decomposing the solution as linear combination of a suitable basis. 

In~\cite{LP,LP2021,Jafarzadeh,Bobaru2021,Jafarzadeh2021} the authors propose a spectral discretization of the model based on the Fourier trigonometric polynomials. And, since this approach can be applied only to periodic problems, the authors perform a volume penalization technique to extend the method to the non periodic setting.

A different way to overcome the limitation of periodic solution consists in replacing the Fourier polynomials by the Chebyshev polynomials (see~\cite{LPcheby}). This method results to be very efficient in terms of computational cost per time step as it can benefit of the use of the Fast Fourier Transform algorithm (FFT).

For time integration St\"ormer-Verlet scheme or Newmark-$\beta$ method are commonly used in the context of wave propagation or peridynamics (see for instance~\cite{LP,LPcheby,coclites}).

In this work, we propose a fast-convolution fully spectral method in space and time based on the implementation of Chebyshev polynomials, in order to have the same accuracy in both variables. The basic idea is to study the problem in the two-dimensional Cartesian $(x,t)$ bounded space-time domain and to expand the unknown function in Chebyshev polynomials for the spatial variable $x$ as well as for the time variable $t$. One of the advantages of this approach is that we do not need to integrate in time the semi-discrete method. Indeed, this step is substituted by a numerial procedure to solve an algebraic system. Additionally, the choice of using Chebyshev polynomials releases us from the use of periodic bounadry conditions.

The paper is organized as follows: in Section~\ref{sec:state} we describe the non linear peridynamic model we aim to study. Useful properties of Chebyshev polynomials are summarized in Section~\ref{sec:cheby}. In Section~\ref{sec:method} we construct the fully spectral method to solve the peridynamic equation and prove the convergence of the proposed method. Simulations and results are shown in Section~\ref{sec:test}. Finally, Section~\ref{sec:concl} concludes the paper.

\section{Statement of the problem}
\label{sec:state}

Peridynamics is a non local version of continuum mechanics based on long-range interactions. The main motivation of the development of such theory relies on the necessity to find an analytical description of discontinuous phenomena like fractures and cracks. The long-range interactions are parametrized thanks to the introduction of a scalar quantity $\delta>0$, called horizon, as a measure of the non locality.

Let $\Omega\subset \R$ be the spatial domain. We consider the following non linear peridynamic model
\begin{equation}
\ptt u(x,t)=\int_{B_{\delta}(x)} f\left(x-x',\left(u(x',t)-u(x,t)\right),t\right)\,dx',\qquad x\in\Omega,\,t>0
\label{eq:nonlinperid}
\end{equation}
which describes the evolution of a material body, and where the unknown $u$ represents the displacement field.
Neglecting the dependence on time, the pairwise force function $f$ is supposed to decompose as follows
\begin{equation}
f(\xi,\eta)=C(\xi)H(\eta),\qquad\text{for every }\,(\xi,\eta)\in\Omega\times\Omega,
\label{eq:f}
\end{equation}
where the function $C$ is an even scalar function, called {\em micromodulus function} (see~\cite{WECKNER2005705}), which vanishes when $|\xi|>\delta$ and in what follows we assume $C\in L^{\infty}(\R)$, while $H$ is an odd function globally Lipschitz continuous (see~\cite{Emmrich_Puhst_2015}), namely there is a nonnegative function $\ell\in L^1(B_{\delta}(0)\cap L^{\infty}(B_{\delta}(0)))$ such that for all $\xi\in\R$, with $|\xi|\le\delta$ and $\eta,\,\eta'$ there holds
\begin{equation}
\label{eq:lipschitz}
\left|H(\eta')-H(\eta)\right|\le \ell(\xi)\left|\eta'-\eta\right|.
\end{equation}

In this work, we will work with $H(\eta)=\eta^3$. This kind of power-type non-linearity appears to be very useful from a numerical point of view, as it allows us to take advantage of the properties of Chebyshev transform and convolution products (see for instance~\cite{LPcheby,LPeigenv}). Moreover, from an analytical point of view, this choice is justified by the fact that peridynamic integral operator resembles a fractional derivative (see for instance~\cite{garrappapopolizio}) and in this setting a well-posedness of the model is achieved (see~\cite{Emmrich_Puhst_2013,CDMV}).

We denote by $\mathcal{L}$ the peridynamic integral operator of~\eqref{eq:nonlinperid}, and thanks to the assumption~\eqref{eq:f} on $f$, we can write it in the following way:
\begin{align}
\label{eq:L}
\mathcal{L} u(x,t) &= \int_\R C(x-x')\ u^3(x',t)\,dx' -3\ u(x,t) \int_\R C(x-x')u^2(x',t)\,dx'\\
&\quad +3\ u^2(x,t)\int_\R C(x'-x)\ u(x',t)\,dx' -u^3(x,t)\int_\R C(x-x')\,dx' \notag\\
&=\left(C\ast u^3\right)(x,t)-3\ u(x,t)\left(C\ast u^2\right)(x,t)+3\ u^2(x,t)\left(C\ast u\right)(x,t)\notag\\
&\quad -\beta\ u^3(x,t),\notag 
\end{align}
where $\ast$ denotes the convolution product and
\[
\beta=\int_\R C(x)\,dx.
\]

Therefore, the peridynamic equation becomes
\begin{equation}
\label{eq:perideq}
\ptt u(x,t) = \mathcal{L}\ u(x,t).
\end{equation}

We add the initial condition 
\[
u(x,0)=u_0(x),\qquad \pt u(x,0)=v_0(x),
\]
so the peridynamic model we aim to study is
\begin{equation}
\label{eq:model}
\begin{cases}
\ptt u(x,t) = \mathcal{L}\ u(x,t)\qquad&x\in\Omega,\,t>0\\
u(x,0)=u_0(x),\quad \pt u(x,0)=v_0(x)\qquad & x\in\Omega
\end{cases}
\end{equation}

Since the peridynamic operator $\mathcal{L}$ is decomposed as the sum of convulution products, the spectral approach to discretized it represents a good framework in order to have good accuracy in the solution and to reduce the computational cost.

In particular, we can discretize the model~\eqref{eq:model} by exploiting Chebyshev polynomials' theory.

In next section, we briefly recall the definition of Chebyshev polynomials and their main properties in relation with the convolution products and the differential operators.  

\section{Basic properties of Chebyshev polynomials}
\label{sec:cheby}

This section is devoted to provide an overview on Chebyshev polynomials and Chebyshev collocation method.

Chebyshev polynomials of the first kind, $T_n(x)$ are explicitly defined as
\begin{equation}
\label{eq:Tn}
T_n(x)=\cos(n\arccos(x)),\qquad x\in[-1,1],\quad n\in\N.
\end{equation}
Without loss of generality, it is always possible to introduce a new variable $y\in[a,b]$ and a linear map which allows to scaling the polynomials from $[-1,1]$ to $[a,b]$,
so, we restrict our discussion to the normalized domain $[-1,1]$.

Chebyshev polynomials~\eqref{eq:Tn} are orthogonal with respect the weight function $w(x)=(\sqrt{1-x^2})^{-1}$, and their boundary values as well as those of their first and second derivatives are given by
\begin{align*}
T_n(\pm 1)&= (\pm 1)^n\\
T_n'(\pm 1)&=(\pm 1)^{n+1} n^2\\
T_n''(\pm 1)&=(\pm 1)^n n^2\frac{n^2-1}{3}
\end{align*}

Moreover, Chebyshev polynomials have an interpolation property: any sufficiently smooth function $f$ defined on the interval $[-1,1]$ can be expanded in a series of Chebyshev polynomials. The $(N+1)$-terms interpolation of $f$ is denoted by $f^N$ and has the following expression
\begin{equation}
\label{eq:chebser}
f^N(x)=\sum_{n=0}^N f_n T_n(x),
\end{equation} 
where $f_n$ are the coefficients of the expansion, whose discrete expression depends on the choice of collocation points.

If we fix a grid corresponding to the Gauss-Lobatto collocation points
\begin{equation}
\label{eq:GL}
x_k=\cos\left(\frac{k\pi}{N}\right),\qquad k=0,\dots,N,
\end{equation}
we can express the Chebyshev coefficients $f_n$ as follows
\begin{equation}
\label{eq:chebcoef}
f_n=\frac{1}{\gamma_n}\sum_{k=0}^N f(x_k) T_n(x_k)w_k,
\end{equation}
where
\[
w_k=\begin{cases}
\frac{\pi}{2N}\qquad&k=0,\,N\\
\frac{\pi}{N}\qquad& k=1,\dots,N-1
\end{cases}
\]
and the normalization constant $\gamma_n$ is given by
\[
\gamma_n=\begin{cases}
\pi\qquad&n=0,\,N\\
\frac{\pi}{2}\qquad &n=1,\dots,N-1
\end{cases}
\]
The choice of the Gauss-Lobatto points as grid points for the discretization is very useful as it can avoid the Gibb's phenomenon at the boundaries.

Thanks to their definition, Chebyshev polynomials are strictly related to the trigonometric cosine functions and, as a consequence, the finite series~\eqref{eq:chebser} can be efficiently computed by a Fourier cosine transform by using a Fast Fourier Transform (FFT) algorithm.

To solve discretized problems, we need to look for the relation between the Chebyshev coefficients and the derivative of any order to the coefficient of the interpolant function itself.

Let $f$ be a sufficiently smooth function approximated by $f^N$ defined in~\eqref{eq:chebser}, where the coefficients $f_n$, $n=0,\dots,N$, are given by~\eqref{eq:chebcoef}. Then, the coefficients of its first derivative $f'$ are given by
\begin{equation}
\label{eq:f'coef}
f_n'=\frac{2}{c_n}\sum_{\stackrel{k=n+1}{k+n\,\, \text{odd}}}^N k\ f_k
\end{equation}
for 
\[
c_n=\begin{cases}
2\qquad&n=0\\
1\qquad&\text{otehrwise}
\end{cases}
\]
or equivalently the coefficients $f_n'$ can be computed as a matrix multiplication:
\begin{equation}
\label{eq:f'coef2}
f_n'=\sum_{k=0}^N D_{nk}\ f_k,
\end{equation}
where $D=(D_{nk})$ is a $(N+1)\times (N+1)$ derivative matrix, with the following representation
\begin{equation}
\label{eq:D}
D=\begin{pmatrix}
0&1&0&3&0&\cdots&n&\cdots&N\\
0&0&4&0&8&&0&\cdots&0\\
0&0&0&6&0&&2n&\cdots&2N\\
0&0&0&0&8&&0&\cdots&0\\
\vdots&0&0&0&0&\ddots&\vdots&&\vdots\\
\vdots&0&0&0&0&\ddots&2n&&0\\
\vdots&&&&&\cdots&0&&2N\\
0&&&&&\cdots&&&0
\end{pmatrix}
\end{equation}
It is an upper triangular matrix with its main diagonal terms equal to zero.

In an analogous way, we can find the coefficients for higher order derivatives, by taking the power of the matrix $D$:
\[
f_n^{\ell}=\sum_{k=0}^N (D^\ell)_{nk} f_k
\]

Let denote by $\mathcal{F}_N$ the linear map which associates to a function $f$ its Chebyshev discrete coefficients $f_n$, $n=0,\dots,N$, defined in~\eqref{eq:chebcoef}, and let $\mathcal{F}_N^{-1}$ be its inverse discrete transform defined by~\eqref{eq:chebser}.

We have that $\mathcal{F}_N$ satisfies the following property when its is composed with a differential operator:
\begin{equation}
\label{eq:diffch}
\frac{d^k f}{d\xi^k}(\xi)=\mathcal{F}_N^{-1}\left((-\Im \xi)^k\mathcal{F}_N(f)\right),
\end{equation}
where $\Im$ denotes the imaginary unit such that $\Im^2=-1$. This property is equivalent to using the derivative matrix $D$ defined in~\eqref{eq:D}.

 Moreover, when $\mathcal{F}_N$ is applied to a convolution product we find
\begin{equation}
\label{eq:chebconv}
f\ast g = \mathcal{F}_N^{-1}\left(\mathcal{F}_N(f)\mathcal{F}_N(g)\right).
\end{equation}
In~\cite{baszenski}, the authors show the relation between the Chebyshev coefficients of $(f\ast g)$ in terms  of the Chebyshev coefficients of $f$ and $g$.

\begin{remark}
When we deal with functions depending both on the space and the time variable, let say $f(x,t)$, we can still approximate it by finite Chebyshev series expansion in both space and time. In this context, we seek an approximation function $f^N(x,t)$ in the two variables $(x,t)\in[-1,1]^2$ such that
\begin{equation}
\label{eq:chebser2d}
f^N(x,t)=\sum_{j=0}^{N_x}\sum_{k=0}^{N_t} f_{jk}\ T_j(x)\ T_k(t)
\end{equation}
where $N_x$ and $N_t$ represents the total number of collocation points in space and time, respectively. The coefficients $f_{jk}$, $j=0,\dots,N_x$
, $k=0,\dots,N_t$, are the Chebyshev coefficients of the discrete Chebyshev expansion and when the grid points are the Gauss-Lobatto points $(x_n,t_m)=(\cos(n\pi/N_x),\cos(m\pi/N_t))$, then their expression is given by
\begin{equation}
\label{eq:fcoef}
f_{jk}=\frac{1}{\gamma_j\gamma_k}\sum_{n=0}^{N_x}\sum_{m=0}^{N_t} f(x_n,t_m)\ T_j(x_n)T_k(t_m)w_n w_m
\end{equation}

For the purpose of our work, we mention here only the expansion of the second order derivative in time:
\begin{equation}
\label{eq:pttchebser}
\ptt\ f^N(x,t)=\sum_{j=0}^{N_x}\sum_{k=0}^{N_t}\sum_{\ell=0}^{N_t} \hat{D}_{k\ell}^{(t)}\ f_{j\ell}\ T_j(x)\ T_k(t) 
\end{equation}
where $\hat{D}=D\cdot D$, with $\hat{D}=(\hat{D}_{k\ell})=\sum_{j=0}^N D_{kj}D_{j\ell}$, and the superscript $(t)$ in the derivative matrix $D$ denotes the differentiation with respect to the temporal coordinates.

We can compact the expression~\eqref{eq:pttchebser} as follows
\begin{equation}
\label{eq:pttcheb}
\ptt\ f^N(x,t)=\sum_{j=0}^{N_x}\sum_{k=0}^{N_t} \hat{f}_{kj}\ T_j(x)\ T_k(t),
\end{equation}
where 
\begin{equation}
\label{eq:a}
\hat{f}_{kj}=\sum_{\ell=0}^{N_t} \hat{D}_{k\ell}^{(t)}\ f_{j\ell}.
\end{equation}

Even in this case we can benefit of the implementation of the Fast Fourier Transform algorithm in the two-dimensional setting to compute the coefficients $u_{jk}$ in~\eqref{eq:chebser2d}.

Additionally, the same results as in~\eqref{eq:diffch} and in~\eqref{eq:chebconv} hold in the bi-dimensional case.
\end{remark}

\section{Chebyshev spectral method for the fully discrete problem}
\label{sec:method}

We develop a fast-convolution fully spectral method to solve the non-linear peridynamic problem~\eqref{eq:model}.

Without loss of generality, we assume $\Omega=[-1,1]$ and $t\in[-1,1]$, we fix $N+1>0$ as the total number of collocation points in both space and time direction, and we take the Gauss-Lobatto points $(x_n,t_m)$ as grid points for the discretization.

We look for an approximation of $u(x,t)$ in the form
\begin{equation}
\label{eq:approx}
u^N(x,t)=\sum_{j=0}^N \sum_{k=0}^N u_{jk}\ T_j(x)\ T_k(t).
\end{equation}

Substituting $u^N(x,t)$ into~\eqref{eq:L}, we find the full expression of the peridynamic operator
\begin{align}
\label{eq:Ldiscr}
\mathcal{L}\ u^N(x,t) &=\left(C\ast \left(u^N\right)^3\right)(x,t)-3\ u^N(x,t) \left(C\ast \left(u^N\right)^2\right)(x,t)\\
&\quad + 3\ (u^N)^2(x,t)(C\ast u^N(x,t))-\beta \left(u^N\right)^3(x,t)\notag
\end{align}

If we evaluate $u^N(x,t)$ at $(x_n,t_m)$, we obtain the discrete form of~\eqref{eq:Ldiscr}
\begin{align}
\label{eq:L-discret}
\mathcal{L}\ u^N_{nm} =& \frac{2}{N}\left(\mathcal{F}_N^{-1}\left(\mathcal{F}_N(C)\mathcal{F}_N\left( \left(u^N_{nm}\right)^3\right)\right)\right) \\ 
& -\frac{6}{N} \left(\mathcal{F}_N^{-1}\left(\mathcal{F}_N\left( \left(u^N_{nm}\right)\right)\ast\left(\mathcal{F}_N( C) \mathcal{F}_N\left( \left(u^N_{nm}\right)^{2}\right) \right)\right)\right)\notag \\
& +\frac{6}{N} \left(\mathcal{F}_N^{-1}\left(\mathcal{F}_N\left( \left(u^N_{nm}\right)^2\right)\ast\left(\mathcal{F}_N( C) \mathcal{F}_N\left( \left(u^N_{nm}\right)\right) \right)\right)\right)\notag \\
& - \beta  ( u^N_{nm})^3,\notag
\end{align}
where $u_{nm}^N$ approximates $u^N(x_n,t_m)$.

Moreover, thanks to the differentiation theorem of the Chebyshev transform, we have
\begin{equation}
\label{eq:d2t}
\ptt u^N(x_n,t_m)=\frac{2}{N}\mathcal{F}_N^{-1}\left( t_m^2 \mathcal{F}_N(u^N_{nm})\right),
\end{equation}
or equivalently
\begin{equation}
\label{eq:d2tt}
\ptt u^N(x_n,t_m)=\sum_{j=0}^N\sum_{k=0}^N \hat{u}_{kj}\ T_j(x_n)T_k(t_m),
\end{equation}
with $\hat{u}_{kj}$ as in~\eqref{eq:a}, for $k,\,j=0,\dots,N$.

Thus, we can consider the discrete form of the model~\eqref{eq:model}
\begin{equation}
\label{eq:modeldiscret}
\begin{cases}
\sum_{j=0}^N\sum_{k=0}^N \hat{u}_{kj}\ T_j(x_n)T_k(t_m) - \mathcal{L}(u^N_{nm},t_m)=0,\qquad n=0,\dots,N,\,m=1,\dots,N\\
\sum_{j=0}^N\sum_{k=0}^N (-1)^k u_{jk}\ T_j(x_n)=u_0(x_n),\qquad n=0,\dots,N\\
\sum_{j=0}^N\sum_{k=0}^N\sum_{\ell=0}^N (-1)^k D_{k\ell}  u_{j\ell}\ T_j(x_n)=v(x_n),\qquad n=0,\dots,N,
\end{cases}
\end{equation}
where in the peridynamic operator $\mathcal{L}$ we have shown explicitly the time dependence.

Solving the above non-linear system with respect to $u^N_{nm}$, we find an approximated solution of~\eqref{eq:model} having the form as in~\eqref{eq:approx}. In practice, in the next section, we use the FSOLVE command implemented in MATLAB software to solve the system~\eqref{eq:modeldiscret}. It consists in a quasi-Newton method called Levenberg-Marquardt method.

We analyze the convergence of the proposed method in the space of functions which admit a modulus of continuity. We start by giving some definitions and recalling some standard results.

\begin{definition}
A continuous function $W:\R_+\to\R_+$ is called {\em modulus of continuity} if it satisfies the following properties:
\begin{itemize}
\item $W$ is increasing,
\item $\lim_{z\to 0} W(z)=0$,
\item $W(z_1+z_2)\le W(z_1)+W(z_2)$, for $z_1$, $z_2\in\R_+$,
\item there exists a constant $c>0$, such that $z\le c\ W(z)$, for all $0<z\le 2$.
\end{itemize}
\end{definition}
An example of a modulus of continuity is given by the functions $W(z)=z^\alpha$, $0<\alpha\le 1$.

Let $B^2$ be the unit ball in $\R^2$.
\begin{definition}
\label{def:2}
We say that a continuous function $u(\cdot,\cdot)$ on $B^2$ {\em admits a modulus of continuity} $W(\cdot)$ if
\begin{equation}
\label{eq:modcont}
|u(\cdot,\cdot)|_W=\sup_{(\bar{x},\bar{t})\ne(\tilde{x},\tilde{t})}\ \left\{\frac{u(\bar{x},\bar{t})-u(\tilde{x},\tilde{t})}{W(|||(\bar{x},\bar{t})-(\tilde{x},\tilde{t})|||)},\ (\bar{x},\bt),\,(\tilde{x},\tilde{t})\in B^2\right\}
\end{equation}
is finite.
\end{definition}
In~\eqref{eq:modcont}, $|||(\bar{x},\bar{t})-(\tilde{x},\tilde{t})|||=\max\{|\bt-\tilde{t}|,|\bx-\tx|\ :\ (\bar{x},\bar{t}),\ (\tilde{x},\tilde{t})\in\bar{\Omega}, (\bar{x},\bar{t})\ne(\tilde{x},\tilde{t})\}$.

We denote the class of all functions satisfying Definition~\ref{def:2} by $\mathcal{C}^{0}_W(B^2)$. Then, it is a Banach space with the norm
\begin{equation}
\label{eq:normC0W}
\norm{u(\cdot,\cdot)}_{0,W}=\norm{u(\cdot,\cdot)}_{\infty} + |u(\cdot,\cdot)|_W.
\end{equation}
Moreover, we denote the class of $k$-times differentiable functions on $B^2$ whose $k$-th derivatives admit $W$ as a modulus of continuity by $\mathcal{C}^{k}_W$. It is a Banach space under the norm
\begin{equation}
\label{eq:normCkW}
\norm{u(\cdot,\cdot)}_{k,W}=\sum_{|s|\le k} \norm{\frac{\partial^s u}{\partial t^s}}_\infty + \sum_{|s|\le k} \norm{\frac{\partial^s u}{\partial x^s}}_\infty + \sum_{|s|= k} \left|\frac{\partial^s u}{\partial t^s}\right|_W + \sum_{|s|= k} \left|\frac{\partial^s u}{\partial x^s}\right|_W.
\end{equation}

We can extend the previous definition on $\bar{\Omega}=[-1,1]\times[-1,1]$ as follows
\begin{align}
\label{eq:CkWO}
\mathcal{C}^k_W(\bar{\Omega})=\Biggl\{u\in\mathcal{C}^k(\bar{\Omega})\,:\,&\text{for each $(\tx,\tilde{t})\in\bar{\Omega}$ there exists a map $\phi:B^2\to\bar{\Omega}$}\\
&\text{such that $(\tx,\tilde{t})\in int(\phi(B^2))$ and $f\circ\phi\in \mathcal{C}^k_W(B^2)$}\Biggr\}\notag
\end{align}

Since the multiplication by a $\mathcal{C}^\infty$ function and the composition with a $\mathcal{C}^\infty$ function are continuous linear transformations, it is possible to show that if
\[
\phi_i:B^2\to\bar{\Omega},\qquad i=1,\dots,\ell,
\]
are a finite collection of maps with
\[
\bar{\Omega}=\bigcup_{i=1}^\ell int(\phi_i(B^2)),
\]
then $u(\cdot,\cdot)\in\mathcal{C}^k_W(\bar{\Omega})$ if and only if $(u\circ\phi)(\cdot,\cdot)\in\mathcal{C}^k_W(B^2)$ for each $i=1,\dots,\ell$. Moreover, the space $\mathcal{C}^k_W(\bar{\Omega})$ is a Banach space under the norm
\begin{equation}
\label{eq:normCkWO}
\norm{u(\cdot,\cdot)}_{k,W}=\sum_{i=1}^{\ell} \norm{(u\circ \phi_i)(\cdot,\cdot)}_{k,W}.
\end{equation}
Additionally, any other choice of finitely many maps covering $\bar{\Omega}$ provides an equivalent norm for the Banach space (for more details see~\cite{ragozin}). 

Let $\mathcal{P}(N,N,\bar{\Omega})$ be the space of all polynomials of total degree at most $2N$ on $\bar{\Omega}$, namely
\[
\mathcal{P}(N,N,\bar{\Omega})=\left\{p(\tx,\tilde{t})=\sum_{i=0}^N\sum_{j=0}^N b_{ij}\tx^i\tilde{t}^j\,:\,(\tx,\tilde{t})\in\bar{\Omega},\,b_{ij}\in\R\right\}.
\]
The following result is a generalization of the Stone-Weierstrass theorem on the space $\mathcal{C}^k_W(\bar{\Omega})$.
\begin{theorem}[see~\cite{ragozin}]
\label{th:ST}
For any $u(\cdot,\cdot)\in\mathcal{C}^k_W(\bar{\Omega})$, there exist a polynomial $p(\cdot,\cdot)\in\mathcal{P}(N,N,\bar{\Omega})$ such that
\begin{equation}
\label{eq:ST}
\norm{u(\cdot,\cdot)-p(\cdot,\cdot)}_\infty\le\frac{L_0\ L_1}{(2N)^k}\ W\left(\frac{1}{(2N)^k}\right),
\end{equation}
where $L_1=\norm{u(\cdot,\cdot)}_{k,W}$ and $L_0$ is a constant depending on $W$, but independent of $N$.
\end{theorem}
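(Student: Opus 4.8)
The plan is to read \eqref{eq:ST} as a quantitative (Jackson-type) version of the Weierstrass theorem and to produce $p$ explicitly as a smoothing of $u$ against a polynomial reproducing kernel, rather than as an abstract best approximant. First I would exploit the atlas structure recorded just before the statement: fix a finite family of charts $\phi_i:B^2\to\bar{\Omega}$ whose interiors cover $\bar{\Omega}$, together with a subordinate $\mathcal{C}^\infty$ partition of unity. Because multiplication by a $\mathcal{C}^\infty$ function and composition with a $\mathcal{C}^\infty$ map are continuous on the spaces $\mathcal{C}^k_W$, and because the norm \eqref{eq:normCkWO} is assembled precisely from the local pieces $u\circ\phi_i$, it is enough to prove the estimate for a function supported in the interior of a single chart and then reassemble. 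The equivalence of the norms coming from different atlases is what will keep the final constant $L_0$ independent of $N$.

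On the model ball $B^2$ the core object is a bivariate algebraic Jackson kernel. I would take $K_N(x,t)=k_N(x)\,k_N(t)$, a tensor product of a one-dimensional polynomial kernel $k_N$ of degree at most $N$, normalized so that $\int k_N=1$ and so that its moments up to order $k-1$ vanish; such kernels are classical and enjoy the concentration property that their mass outside a window of width $\sim 1/N$ is negligible. Applying to $u$ the integral operator whose kernel is $K_N$ produces a polynomial $p\in\mathcal{P}(N,N,\bar{\Omega})$ of the prescribed tensor degree, because the kernel is itself a polynomial of degree $N$ in the output variables and so cannot raise the degree in either $x$ or $t$.

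The error estimate then rests on Taylor's theorem. Expanding $u$ to order $k$ about the evaluation point and using the vanishing-moment conditions to annihilate every polynomial term of degree $<k$, the difference $u-p$ is reduced to a $K_N$-weighted average of $k$-th order Taylor remainders. Each such remainder is a difference of two $k$-th derivatives, hence is bounded by $|\cdot|_W\le L_1$ times $W$ evaluated at the displacement; the $k$ reproduced orders supply the algebraic gain of order $(2N)^{-k}$, while the sharp concentration of the kernel localizes the modulus of continuity and yields the factor $W(\cdot)$ of \eqref{eq:ST}. Collecting the constants gives a bound of the stated form with $L_1=\norm{u(\cdot,\cdot)}_{k,W}$ and an $N$-independent $L_0$.

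The main obstacle, I expect, is the construction and moment bookkeeping of the algebraic (as opposed to trigonometric) kernel on $[-1,1]$ so that it simultaneously remains of degree $N$, reproduces polynomials of degree $<k$, and stays sharply concentrated at scale $1/(2N)$. The substitution $x=\cos\theta$ that turns the Chebyshev setting into a trigonometric one is the natural device, but one must then check that pulling the trigonometric Jackson kernel back to $[-1,1]$ preserves both the degree and the concentration uniformly up to the endpoints, where the Gauss--Lobatto geometry degenerates. The secondary difficulty is purely organizational: keeping the constants generated by the partition of unity and the chart transitions independent of $N$, which is exactly what the equivalence-of-norms statement preceding Theorem~\ref{th:ST} is designed to furnish.
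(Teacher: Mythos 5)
The paper does not actually prove Theorem~\ref{th:ST}: it is imported verbatim from \cite{ragozin} as a known generalization of Jackson/Stone--Weierstrass, so there is no internal proof to measure your argument against. That said, your sketch is the standard constructive route to such results and is essentially the one underlying Ragozin's work: localize with charts and a smooth partition of unity (using the equivalence of the norms \eqref{eq:normCkWO} to keep constants $N$-free), convolve with a tensor-product algebraic Jackson-type kernel of coordinate degree $N$ that reproduces polynomials of degree $<k$, and control the error by Taylor's theorem plus the modulus of continuity of the $k$-th derivatives. The two technical obstacles you flag (building a degree-$N$ algebraic kernel with the right moments and concentration near the endpoints under $x=\cos\theta$, and the bookkeeping of chart constants) are exactly the right ones, and both are surmountable by classical means.

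There is, however, one concrete mismatch you should not paper over: the argument you describe produces a displacement of order $1/N$ inside the modulus of continuity, hence a bound of the form $L_0L_1(2N)^{-k}\,W\bigl(c/(2N)\bigr)$, not $W\bigl(1/(2N)^{k}\bigr)$ as in \eqref{eq:ST}. Since $W$ is increasing, the stated bound is strictly stronger for $k\ge 2$, and it is in fact unattainable in general: for $W(z)=z$ it would give best approximation of order $N^{-2k}$ for functions that are merely $\mathcal{C}^{k}$ with Lipschitz $k$-th derivatives, contradicting Bernstein-type inverse theorems (already for $k=2$, which is the case actually used in Lemma~\ref{lm:1}). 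So your proof, carried out correctly, establishes the classical estimate $(2N)^{-k}W(1/(2N))$ rather than the literal statement; the discrepancy lies in the paper's transcription of Ragozin's theorem, not in your method, but as a proof of the statement as written your argument has a genuine gap that cannot be closed. You should either prove the corrected form or explain why the literal form is being quoted.
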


In order to prove the convergence of the method and the existence of solutions of the system~\eqref{eq:modeldiscret}, we reformulate it as a system of algebraic inequalities in the following way
\begin{equation}
\label{eq:dismodel}
\begin{cases}
\left|\sum_{j=0}^N\sum_{k=0}^N \hat{u}_{kj}\ T_j(x_n)T_k(t_m) - \mathcal{L}(u^N_{nm}, t_m)\right|\le \frac{\sqrt{N}}{(2N-2)^2}\ W\left(\frac{1}{(2N-2)^2}\right),&\quad n=0,\dots,N,\,m=1,\dots,N,\\
\left|\sum_{j=0}^N\sum_{k=0}^N (-1)^k u_{jk}\ T_j(x_n)-u_0(x_n)\right|\le\frac{\sqrt{N}}{(2N-2)^2}\ W\left(\frac{1}{(2N-2)^2}\right),&\quad n=0,\dots,N,\\
\left|\sum_{j=0}^N\sum_{k=0}^N\sum_{\ell=0}^N (-1)^k D_{k\ell}  u_{j\ell}\ T_j(x_n)-v(x_n)\right|\le0,&\quad n=0,\dots,N,
\end{cases}
\end{equation}
where $N$ is sufficiently large and $W$ is a given modulus of continuity.

We can notice that
\[\lim_{N\to\infty}\frac{\sqrt{N}}{(2N-2)^2}\ W\left(\frac{1}{(2N-2)^2}\right)=0,\]
so, any solution $\bar{u}^N=(u^N_{nm})$ for $n,\,m=0,\dots,N$ of the system~\eqref{eq:dismodel} is a solution of the system~\eqref{eq:modeldiscret} when $N$ goes to infinity. As a consequence, to prove the existence of solutions of~\eqref{eq:modeldiscret}, it is sufficient to prove the existence of solutions for the system~\eqref{eq:dismodel}.

The following lemmas are preliminary to the convergence theorem.
\begin{lemma}
\label{lm:1}
Let $u\in\mathcal{C}^2_W(\bar{\Omega})$ be a solution of the peridynamic model~\eqref{eq:model}. Then there exists a function $\tilde{u}$ such that
\begin{equation}
\label{eq:est0}
\left|u(\bx,\bt)-\tilde{u}(\bx,\bt)\right|\le\frac{2L}{(2N-2)^2}\ W\left(\frac{1}{(2N-2)^2}\right),\quad (\bx,\bt)\in\bar{\Omega}
\end{equation}
for some constant $L>0$.
\end{lemma}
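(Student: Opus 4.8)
The plan is to obtain $\tilde{u}$ as a polynomial approximant of $u$ and to read off the estimate~\eqref{eq:est0} directly from the generalized Stone--Weierstrass theorem, Theorem~\ref{th:ST}. Since $u$ is assumed to lie in $\mathcal{C}^2_W(\bar{\Omega})$ by hypothesis, Theorem~\ref{th:ST} applies with $k=2$: for each choice of the degree parameter it produces a polynomial in the corresponding space $\mathcal{P}(\cdot,\cdot,\bar{\Omega})$ together with an explicit $\infty$-norm estimate. The only real work is the bookkeeping needed to match the denominator $(2N-2)^2$ that appears in the statement (and later in~\eqref{eq:dismodel}), rather than the $(2N)^2$ that Theorem~\ref{th:ST} produces verbatim.

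Concretely, I would invoke Theorem~\ref{th:ST} with $k=2$ but with the degree $N-1$ in place of $N$. This yields a polynomial $\tilde{u}\in\mathcal{P}(N-1,N-1,\bar{\Omega})\subseteq\mathcal{P}(N,N,\bar{\Omega})$ such that
\begin{equation*}
\norm{u(\cdot,\cdot)-\tilde{u}(\cdot,\cdot)}_\infty\le\frac{L_0\,L_1}{(2N-2)^2}\ W\left(\frac{1}{(2N-2)^2}\right),
\end{equation*}
where $L_1=\norm{u(\cdot,\cdot)}_{2,W}$ and $L_0$ depends only on $W$. Setting $L:=L_0 L_1/2$, so that $2L=L_0 L_1$, and using that the $\infty$-norm dominates the pointwise value $\left|u(\bx,\bt)-\tilde{u}(\bx,\bt)\right|$, gives exactly~\eqref{eq:est0} for every $(\bx,\bt)\in\bar{\Omega}$. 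The constant $L$ is finite precisely because $u\in\mathcal{C}^2_W(\bar{\Omega})$ guarantees $\norm{u(\cdot,\cdot)}_{2,W}<\infty$, and it is independent of $N$, as required.

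I do not expect a genuine obstacle here, since the estimate is an immediate corollary of Theorem~\ref{th:ST}; the one point deserving attention is the shift from degree $N$ to degree $N-1$. This is what produces the factor $(2N-2)$ and, at the same time, keeps $\tilde{u}$ inside a polynomial space small enough that the discrete operations of Section~\ref{sec:method}---in particular the second time-derivative encoded by $\hat{D}=D\cdot D$, which lowers the polynomial degree in $t$---remain admissible within expansions of the form~\eqref{eq:approx}. I would also remark that the hypothesis that $u$ solves~\eqref{eq:model} is not actually used in establishing~\eqref{eq:est0}: it only fixes the regularity class $\mathcal{C}^2_W(\bar{\Omega})$, and it will instead play a role in the companion lemmas that estimate $\mathcal{L}u$ and the initial data when feeding $\tilde{u}$ into the inequality system~\eqref{eq:dismodel}.
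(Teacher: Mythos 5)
Your argument does prove the inequality exactly as stated --- indeed the statement, read literally, is almost vacuous (taking $\tilde{u}=u$ satisfies~\eqref{eq:est0} with zero error) --- but your construction of $\tilde{u}$ is genuinely different from the paper's, and the difference is not cosmetic. The paper does \emph{not} apply Theorem~\ref{th:ST} to $u$ itself. It applies it to the second time derivative: it picks $p\in\mathcal{P}(N-2,N,\bar{\Omega})$ with $\norm{u_{tt}-p}_\infty\le \frac{L}{(2N-2)^2}W\bigl(\frac{1}{(2N-2)^2}\bigr)$ and then defines
\begin{equation*}
\tilde{u}(\bx,\bt)=u(\bx,-1)+u_t(\bx,-1)(\bt+1)+\int_{-1}^{\bt}\int_{-1}^{\tau}p(\bx,s)\,ds\,d\tau ,
\end{equation*}
so that the factor $2$ in~\eqref{eq:est0} arises from $\int_{-1}^{\bt}\int_{-1}^{\tau}ds\,d\tau\le 2$ rather than from your ad hoc rescaling $L:=L_0L_1/2$. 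This particular $\tilde{u}$ is the object that Lemma~\ref{lm:2} and Theorem~\ref{th:existence} actually use: the existence proof needs (i) $\tilde{u}_{tt}=p$, so that $\abs{\tilde{u}_{tt}-u_{tt}}$ is controlled and the residual of the discrete equation in~\eqref{eq:est1} can be bounded, and (ii) $\tilde{u}(\cdot,-1)=u(\cdot,-1)$ and $\tilde{u}_t(\cdot,-1)=u_t(\cdot,-1)$ exactly, which is what makes the initial-condition residuals~\eqref{eq:estinit1}--\eqref{eq:estinit2} work. A direct sup-norm approximant of $u$, as you propose, delivers neither property: uniform closeness of polynomials to $u$ says nothing about closeness of their second derivatives (Markov-type inequalities degrade such bounds by powers of $N$), and it does not force the approximant to match the initial data at $\bt=-1$. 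So while your proof is valid for the isolated statement, you should switch to the approximate-the-derivative-then-integrate-twice construction, since the lemma's role in the paper is precisely to introduce that specific $\tilde{u}$.
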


\begin{proof}
By Theorem~\ref{th:ST}, there exists $p(\cdot,\cdot)\in\mathcal{P}(N-2,N,\bar{\Omega})$ such that
\begin{equation}
\label{eq:ST2der}
\norm{u_{tt}(\bx,\bt)-p(\bx,\bt)}_\infty\le \frac{L}{(2N-2)^2}\ W\left(\frac{1}{(2N-2)^2}\right),\quad (\bx,\bt)\in\bar{\Omega},
\end{equation}
for some constant $L>0$ independent on $N$.

We define
\begin{equation}
\label{eq:util}
\tilde{u}(\bx,\bt)=u(\bx,-1)+u_t(\bx,-1)\ (\bt+1)+\int_{-1}^{\bt}\int_{-1}^\tau p(\bx,s)\,dsd\tau.
\end{equation}

We have
\begin{align*}
\left|u(\bx,\bt)-\tilde{u}(\bx,\bt)\right|&=\left|\int_{-1}^{\bt}\int_{-1}^\tau \left(u_{tt}(\bx,s)-p(\bx,s)\right)dsd\tau\right|\\
&\le \int_{-1}^{\bt}\int_{-1}^\tau \left|u_{tt}(\bx,s)-p(\bx,s)\right|dsd\tau\\
&\le \frac{L}{(2N-2)^2}\ W\left(\frac{1}{(2N-2)^2}\right)\int_{-1}^{\bt}\int_{-1}^\tau dsd\tau\\
&\le \frac{2L}{(2N-2)^2}\ W\left(\frac{1}{(2N-2)^2}\right)
\end{align*}
\end{proof}

\begin{lemma}
\label{lm:2}
Let $u_1,\,u_2\in\mathcal{C}^2_W(\bar{\Omega})$ satisfying equation~\eqref{eq:est0}. Then, there is a positive constant $L$ such that the following estimate holds
\begin{equation}
\label{eq:psiest}
\left|\mathcal{L}(u_1,\bt)-\mathcal{L}(u_2,\bt)\right|\le \frac{4L\beta}{(2N-2)^2}\ W\left(\frac{1}{(2N-2)^2}\right).
\end{equation}
\end{lemma}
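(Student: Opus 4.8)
The plan is to reduce the claim to two ingredients: a control on $\norm{u_1-u_2}_\infty$ coming from the hypothesis that both $u_1$ and $u_2$ satisfy~\eqref{eq:est0}, and a Lipschitz-type estimate for the nonlinear operator $\mathcal{L}$ in the sup-norm whose constant is proportional to $\beta$. First I would observe that, since $u_1,u_2\in\mathcal{C}^2_W(\bar\Omega)$ are in particular continuous on the compact set $\bar\Omega$, they are uniformly bounded: there is $M>0$ with $\norm{u_i}_\infty\le M$ for $i=1,2$, and this $M$ will feed into the Lipschitz constant. From~\eqref{eq:est0} applied to $u_1$ and $u_2$ against the common reference solution $u$, the triangle inequality yields an estimate of the form $\norm{u_1-u_2}_\infty \le \frac{4L}{(2N-2)^2}\,W\!\left(\frac{1}{(2N-2)^2}\right)$.

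Next I would prove the Lipschitz estimate, which is the core of the argument. Writing out $\mathcal{L}(u_1,\bt)-\mathcal{L}(u_2,\bt)$ using~\eqref{eq:L}, I would treat the four summands separately. The key algebraic tools are the factorizations $a^3-b^3=(a-b)(a^2+ab+b^2)$ and $a^2-b^2=(a-b)(a+b)$, together with the add-and-subtract identity $u_1 v_1 - u_2 v_2 = u_1(v_1-v_2)+(u_1-u_2)v_2$ to split each product term. The convolution is controlled by $\norm{C\ast g}_\infty\le \norm{C}_{L^1}\norm{g}_\infty = \beta\norm{g}_\infty$, valid since the micromodulus $C$ is nonnegative with $\int_\R C=\beta$. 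Each of the four terms then produces a bound of the form (numerical constant)$\times M^2\beta\,\norm{u_1-u_2}_\infty$; for instance the pure convolution term gives $\abs{(C\ast(u_1^3-u_2^3))(\bx,\bt)}\le \beta\cdot 3M^2\norm{u_1-u_2}_\infty$, and analogous bounds hold for the mixed terms $u(C\ast u^2)$, $u^2(C\ast u)$ and the local term $\beta u^3$.

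Summing the four contributions gives a Lipschitz estimate $\abs{\mathcal{L}(u_1,\bt)-\mathcal{L}(u_2,\bt)}\le K\beta\,\norm{u_1-u_2}_\infty$, with $K$ a numerical constant times $M^2$. Finally I would substitute the bound on $\norm{u_1-u_2}_\infty$ and absorb the numerical factors and $M^2$ together with the old $L$ into a single renamed constant $L$, arriving at the asserted inequality $\frac{4L\beta}{(2N-2)^2}\,W\!\left(\frac{1}{(2N-2)^2}\right)$.

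The main obstacle is the careful bookkeeping of the nonlinear and convolution terms: one must verify that the factor $\beta$ emerges cleanly from the convolution bound in every summand (including the explicitly local term $-\beta u^3$), and that the uniform bound $M$ is legitimately available and independent of $N$. The modulus-of-continuity factor $W(\tfrac{1}{(2N-2)^2})$ and the $(2N-2)^{-2}$ prefactor are simply carried along from~\eqref{eq:est0}; the only genuine analytic content is the sup-norm Lipschitz continuity of $\mathcal{L}$, the remainder being absorption of constants.
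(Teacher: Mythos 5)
Your proposal is correct and shares the paper's overall architecture -- a sup-norm Lipschitz estimate for $\mathcal{L}$ with constant proportional to $\beta$, composed with the bound $\norm{u_1-u_2}_\infty\le \tfrac{4L}{(2N-2)^2}W\bigl(\tfrac{1}{(2N-2)^2}\bigr)$ obtained from \eqref{eq:est0} by the triangle inequality through the common reference function -- but you obtain the Lipschitz estimate by a genuinely different route. The paper keeps $\mathcal{L}$ in its integral form over $B_\delta(x)$ and invokes the abstract hypothesis \eqref{eq:lipschitz} on $H$ directly, which in one line yields
$\abs{\mathcal{L}(u_1,\bt)-\mathcal{L}(u_2,\bt)}\le L\int_{B_\delta(x)}C(x-x')\abs{u_1(x',\bt)-u_2(x',\bt)}\,dx'+L\abs{u_1(x,\bt)-u_2(x,\bt)}\int_{B_\delta(x)}C(x-x')\,dx'$,
and then bounds both integrals by $\beta\norm{u_1-u_2}_\infty$. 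You instead expand $\mathcal{L}$ via the convolution decomposition \eqref{eq:L} with $H(\eta)=\eta^3$ and control each of the four summands using the factorizations of $a^3-b^3$ and $a^2-b^2$, a uniform bound $M$ on $\norm{u_i}_\infty$, and Young's inequality $\norm{C\ast g}_\infty\le\beta\norm{g}_\infty$. Your version is longer but arguably more honest: since $H(\eta)=\eta^3$ is only locally Lipschitz, the paper's appeal to the global condition \eqref{eq:lipschitz} implicitly requires exactly the boundedness observation you make explicit, and your Lipschitz constant visibly carries the $M^2$ dependence that the paper silently absorbs into $L$. The paper's route buys brevity and generality (it works for any $H$ satisfying \eqref{eq:lipschitz}, not just the cubic); yours buys a self-contained verification for the specific nonlinearity actually used, at the cost of assuming $C\ge 0$ so that $\norm{C}_{L^1}=\beta$. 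Both arrive at the stated bound after renaming constants.
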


\begin{proof}
Thanks to Lemma~\ref{lm:1} and the property of the peridynamic operator~\eqref{eq:lipschitz}, we find that there exists a positive constant $L$ such that 
\begin{align*}
\left|\mathcal{L}(u_1,\bt)-\mathcal{L}(u_2,\bt)\right|\le& L\int_{B_\delta(x)} C(x-x')\left|u_1(x',\bt)-u_2(x',\bt)\right|\,dx'\\
&+L\left|u_1(x,\bt)-u_2(x,\bt)\right|\int_{B_\delta(x)}C(x-x')\,dx'\\
\le&\,\frac{4L\beta}{(2N-2)^2}\ W\left(\frac{1}{(2N-2)^2}\right).
\end{align*}
\end{proof}

Now, we are able to prove that there exists at least one solution of~\eqref{eq:dismodel}.
\begin{theorem}
\label{th:existence}
Let $u\in\mathcal{C}^2_W(\bar{\Omega})$ be a solution of the peridynamic model~\eqref{eq:model}. Then there exists a positive integer $K$ such that for any $N\ge K$, the system~\eqref{eq:dismodel} admits a solution $\bar{u}^N=(\bar{u}^N_{nm})$ for $n,\,m=0,\dots,N$ such that
\begin{equation}
\label{eq:convest}
\left|u(\bx_k,\bt_h)-\bar{u}^N_{nm}\right|\le\frac{L}{(2N-2)^2}\ W\left(\frac{1}{(2N-2)^2}\right),\quad h,\,k=0,\dots,N
\end{equation}
for some positive constant $L$ independent of $N$.
\end{theorem}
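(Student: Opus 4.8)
The plan is to exhibit an explicit candidate solution of \eqref{eq:dismodel} built from the approximating function of Lemma~\ref{lm:1}, rather than to run an abstract fixed-point argument. Starting from the given exact solution $u\in\mathcal{C}^2_W(\bar\Omega)$, let $p$ be the polynomial of Lemma~\ref{lm:1} approximating $u_{tt}$ and let $\tilde u$ be the function defined in \eqref{eq:util}. I would take as candidate the grid values $\bar u^N_{nm}:=\tilde u(x_n,t_m)$ at the Gauss--Lobatto nodes, and denote by $u^N=\sum_{j,k}u_{jk}T_j(x)T_k(t)$ the Chebyshev interpolant determined by these values. The convergence estimate \eqref{eq:convest} is then immediate from Lemma~\ref{lm:1}: at each node one has $\left|u(x_n,t_m)-\bar u^N_{nm}\right|=\left|u(x_n,t_m)-\tilde u(x_n,t_m)\right|\le \frac{2L}{(2N-2)^2}W\!\left(\frac{1}{(2N-2)^2}\right)$, which is \eqref{eq:convest} up to renaming the constant. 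It then remains only to check that this candidate satisfies the three inequalities of \eqref{eq:dismodel}.

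For the two initial-data inequalities I would argue that the residuals are in fact identically zero. The quantity $\sum_{j,k}(-1)^k u_{jk}T_j(x_n)$ equals $u^N(x_n,-1)$, and since $t=-1$ and $x_n$ are collocation nodes the interpolant reproduces its data exactly, so $u^N(x_n,-1)=\tilde u(x_n,-1)$; by the construction \eqref{eq:util} one has $\tilde u(\cdot,-1)=u(\cdot,-1)=u_0$, whence the second residual vanishes and the inequality holds trivially. Likewise $\sum_{j,k,\ell}(-1)^k D_{k\ell}u_{j\ell}T_j(x_n)$ is the spectral time-derivative $\pt u^N(x_n,-1)$; provided $\tilde u$ lies within the interpolation space in $t$ (time-degree at most $N$), spectral differentiation in $t$ is exact at the nodes, so using $\pt\tilde u(\cdot,-1)=u_t(\cdot,-1)=v_0$, again from \eqref{eq:util}, the third residual is exactly $0$, matching the requirement that its right-hand side be $\le 0$.

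The substantive step is the first inequality, the collocated PDE residual. Here I would write the spectral second time-derivative of the interpolant as $\sum_{j,k}\hat u_{kj}T_j(x_n)T_k(t_m)=\ptt u^N(x_n,t_m)$ and exploit that, on the interpolation space, this equals $\ptt\tilde u(x_n,t_m)=p(x_n,t_m)$, the $x$-dependent but $t$-independent terms of $\tilde u$ being annihilated by $\ptt$. Then I would split the residual as
\[
\bigl|p(x_n,t_m)-\mathcal{L}(\bar u^N_{nm},t_m)\bigr|\le \bigl|p(x_n,t_m)-u_{tt}(x_n,t_m)\bigr| + \bigl|\mathcal{L}(u,t_m)(x_n)-\mathcal{L}(\tilde u,t_m)(x_n)\bigr|,
\]
using the governing equation $u_{tt}=\mathcal{L}u$ of \eqref{eq:model} to identify $u_{tt}(x_n,t_m)=\mathcal{L}(u,t_m)(x_n)$. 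The first term is controlled by \eqref{eq:ST2der}, and the second by the Lipschitz-type estimate \eqref{eq:psiest} of Lemma~\ref{lm:2} applied to the pair $u_1=u$, $u_2=\tilde u$, which satisfy \eqref{eq:est0}. Adding the two contributions bounds the residual by $\frac{C}{(2N-2)^2}W\!\left(\frac{1}{(2N-2)^2}\right)$ for a fixed constant $C=C(L,\beta)$ independent of $N$. Finally I would choose $K$ so large that $C\le\sqrt N$ for all $N\ge K$, so that this bound sits under the prescribed right-hand side $\frac{\sqrt N}{(2N-2)^2}W\!\left(\frac{1}{(2N-2)^2}\right)$; then $\bar u^N$ solves \eqref{eq:dismodel} for every $N\ge K$, establishing both existence and the estimate \eqref{eq:convest}.

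The hard part will be the first inequality, and within it two bookkeeping points deserve care. The first is the degree accounting that makes $\ptt u^N$ at the nodes coincide with $p$ and makes spectral $t$-differentiation exact for the velocity condition: the double time-integral in \eqref{eq:util} raises the time-degree by two, so the polynomial $p$ supplied by Lemma~\ref{lm:1} must be chosen with time-degree at most $N-2$ in order for $\tilde u$ to remain in the admissible interpolation space; if instead one allows $\tilde u$ to slightly exceed this degree, then $\ptt u^N-\ptt\tilde u$ no longer vanishes and one must absorb a spectral derivative-of-interpolation error, which is precisely what the $\sqrt N$ slack on the right-hand side is designed to accommodate. The second point is verifying that the single constant $C$ collecting the approximation error from \eqref{eq:ST2der} and the operator-Lipschitz error from \eqref{eq:psiest} is genuinely $N$-independent, so that the threshold $K$ with $C\le\sqrt N$ exists.
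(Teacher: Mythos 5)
Your proposal is correct and follows essentially the same route as the paper: the same candidate $\bar u^N_{nm}=\tilde u(x_n,t_m)$ built from Lemma~\ref{lm:1}, the same splitting of the collocated residual via \eqref{eq:ST2der} and Lemma~\ref{lm:2}, and the same choice of $K$ with the constant under $\sqrt N$. Your observation that the initial-data residuals vanish identically (and your explicit degree accounting for $p$, which the paper handles by taking $p\in\mathcal{P}(N-2,N,\bar\Omega)$) is a slightly sharper version of the paper's bounds but not a different argument.
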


\begin{proof}
We define
\begin{equation}
\label{eq:ahk}
\bar{u}^N_{nm}=\tilde{u}(x_n,t_m),\quad n,\,m=0,\dots,N,
\end{equation}
where $\tilde{u}$ is defined in~\eqref{eq:util} and satisfies equation~\eqref{eq:est0}.

By the definition of $\tilde{u}$, we find that it is a polynomial of degree at most $2N$. Thus, its second derivatives at Gauss-Lobatto nodes $(x_n,t_m)$, $n,\,m=0,\dots,N$ are given by
\begin{equation}
\label{eq:utila}
\tilde{u}_{tt}(x_n,t_m)=\sum_{j=0}^N\sum_{k=0}^N\sum_{\ell=0}^N \hat{D}_{k\ell}^{(t)} u_{j\ell} \ T_j(x_n)T_k(t_m).
\end{equation}

Using the relations~\eqref{eq:model},~\eqref{eq:est0},~\eqref{eq:psiest} and~\eqref{eq:utila}, we get
\begin{align}
\label{eq:est1}
\left|\sum_{j=0}^N\sum_{k=0}^N\sum_{\ell=0}^N \hat{D}_{k\ell}^{(t)} u_{j\ell} \ T_j(x_n)T_k(t_m)-\mathcal{L}(\bar{u}^N_{nm},t_m)\right|=\left|\tilde{u}_{tt}(x_n,t_m)-\mathcal{L}(\bar{u}^N_{nm},t_m)\right|&\\
\le\left|\tilde{u}_{tt}(x_n,t_m)-u_{tt}(x_n,t_m)\right|+\left|u_{tt}(x_n,t_m)-\mathcal{L}(\bar{u}^N_{nm},t_m)\right|&\notag\\
=\left|p(x_n,t_m)-u_{tt}(x_n,t_m)\right|+\left|\mathcal{L}(u(x_n,t_m))-\mathcal{L}(\bar{u}^N_{nm},t_m)\right|&\notag\\
\le\frac{L(1+4\beta)}{(2N-2)^2}\ W\left(\frac{1}{(2N-2)^2}\right)&.\notag
\end{align}

Moreover, we find an analogous estimate for the initial conditions:
\begin{align}
\label{eq:estinit1}
\left|\tilde{u}(x_n,-1)-u_0(x_n)\right|&\le\left|\tilde{u}(x_n,-1)-u(x_n,-1)\right|+\left|u(x_n,-1)-u_0(x_n)\right|\\
&\le\frac{2L}{(2N-2)^2}\ W\left(\frac{1}{(2N-2)^2}\right)\notag
\end{align}
and by equation~\eqref{eq:util}
\begin{align}
\label{eq:estinit2}
\left|\tilde{u}_t(x_n,-1)-v(x_n)\right|&\le\left|\tilde{u}_t(x_n,-1)-u_t(x_n,-1)\right|+\left|u_t(x_n,-1)-v(x_n)\right|\le0.
\end{align}

Therefore, if we choose $K$ such that
\[\max\{L(4\beta+1),2L\}\le\sqrt{N},\]
we have that $\bar{u}^N_{nm}$, $n,\,m=0,\dots,N$ defined in~\eqref{eq:ahk} satisfies~\eqref{eq:dismodel} for $N\ge K$, and this concludes the proof.
\end{proof}

Finally we prove that 
the solution of the system~\eqref{eq:dismodel} 
converges to the solution of the peridynamic model~\eqref{eq:model}.

\begin{theorem}
\label{th:convresult}
Let $\bar{u}^N=(\bar{u}^N_{nm})_{n,m=0}^N$, for $N\ge K$ be the sequence of solutions of~\eqref{eq:dismodel}, given by~\eqref{eq:ahk}, and let $u^N(\cdot,\cdot)$, for $N\ge K$ be its interpolating polynomial
\begin{equation}
\label{eq:interpol}
u^N(\bx,\bt)=\sum_{i=0}^N\sum_{j=0}^N \bar{a}^N_{ij} T_i(\bx)T_j(\bt),
\end{equation}
with
\[
\bar{a}^N_{ij}=\frac{1}{\gamma_i\gamma_j}\sum_{n=0}^N\sum_{m=0}^N \bar{u}^N_{nm}\ T_i(x_n)T_j(t_m) w_n w_m
\]
Let assume that for any $\bx\in[-1,1]$, the sequence $\{u^N(\bx,-1),u^N_t(\bx,-1),u^N_{tt}(\cdot,\cdot)\}_{N=K}^\infty$ has a subsequence $\{u^{N_i}(\bx,-1),u^{N_i}_t(\bx,-1),u^{N_i}_{tt}(\cdot,\cdot)\}_{i=0}^\infty$ uniformly converging to \\$(\vfi_1(\bx),\vfi_2(\bx),\vfi_3(\cdot,\cdot))$, where $\vfi_1$, $\vfi_2\in\mathcal{C}^2([-1,1])$ and $\vfi_3\in\mathcal{C}^2(\bar{\Omega})$. Then
\begin{equation}
\label{eq:lim}
\lim_{i\to\infty} u^{N_i}(\bx,\bt)=\tilde{u}(\bx,\bt),\quad (\bx,\bt)\in\bar{\Omega}
\end{equation}
is a solution of the peridynamic model~\eqref{eq:model}.
\end{theorem}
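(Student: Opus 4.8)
The plan is to reconstruct the candidate limit directly from its Cauchy data at $t=-1$ together with its second time derivative, and then to verify the three defining relations of \eqref{eq:model} by passing to the limit in the three families of inequalities \eqref{eq:dismodel}. For the common right-hand side of \eqref{eq:dismodel} write $\eta_N=\frac{\sqrt N}{(2N-2)^2}\,W\big(\frac{1}{(2N-2)^2}\big)$, which tends to $0$ as already observed. Since each $u^{N_i}$ is a polynomial, applying the fundamental theorem of calculus twice in the time variable gives the exact identity
\[
u^{N_i}(\bx,\bt)=u^{N_i}(\bx,-1)+u^{N_i}_t(\bx,-1)\,(\bt+1)+\int_{-1}^{\bt}\int_{-1}^{\tau} u^{N_i}_{tt}(\bx,s)\,ds\,d\tau.
\]
By the hypothesis the three terms on the right converge uniformly on $\bar\Omega$ to $\vfi_1$, $\vfi_2\,(\bt+1)$ and $\int_{-1}^{\bt}\int_{-1}^{\tau}\vfi_3(\bx,s)\,ds\,d\tau$ respectively, the last because the double integral depends continuously on its integrand for the uniform norm. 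Hence $u^{N_i}\to\tilde u$ uniformly, where $\tilde u$ is given by the very formula \eqref{eq:util} with $(u(\cdot,-1),u_t(\cdot,-1),p)$ replaced by $(\vfi_1,\vfi_2,\vfi_3)$; in particular $\tilde u\in\mathcal C^2(\bar\Omega)$ with $\tilde u(\bx,-1)=\vfi_1(\bx)$, $\tilde u_t(\bx,-1)=\vfi_2(\bx)$ and $\tilde u_{tt}=\vfi_3$.

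Next I would dispose of the initial data. Evaluated at the Gauss--Lobatto nodes, the second and third lines of \eqref{eq:dismodel} read $|u^{N_i}(x_n,-1)-u_0(x_n)|\le\eta_{N_i}$ and $|u^{N_i}_t(x_n,-1)-v_0(x_n)|\le 0$. Letting $i\to\infty$, the right-hand sides vanish while the nodes become dense in $[-1,1]$ and the sequences are equicontinuous (being uniformly convergent to continuous limits); this forces $\vfi_1=u_0$ and $\vfi_2=v_0$ on all of $[-1,1]$. Thus $\tilde u$ satisfies the two initial conditions of \eqref{eq:model} exactly.

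It remains to verify the evolution equation $\tilde u_{tt}=\mathcal L\tilde u$, which is the heart of the matter. Fix $(\bx,\bt)\in\bar\Omega$ and choose nodes $(x_{n_i},t_{m_i})\to(\bx,\bt)$. Since $\sum_{j,k}\hat u_{kj}T_j(x_n)T_k(t_m)=u^{N_i}_{tt}(x_n,t_m)$ by \eqref{eq:d2tt} and $\bar u^{N_i}_{n_i m_i}=u^{N_i}(x_{n_i},t_{m_i})$ by the interpolation property, the first line of \eqref{eq:dismodel} gives
\[
\big|u^{N_i}_{tt}(x_{n_i},t_{m_i})-\mathcal L(u^{N_i}(x_{n_i},t_{m_i}),t_{m_i})\big|\le\eta_{N_i}\to 0.
\]
On one hand $u^{N_i}_{tt}(x_{n_i},t_{m_i})\to\vfi_3(\bx,\bt)=\tilde u_{tt}(\bx,\bt)$ by uniform convergence and continuity of $\vfi_3$. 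On the other hand, because $\mathcal L$ is assembled from the fixed kernel $C\in L^\infty(\R)$ through convolutions and the cubic nonlinearity, the Lipschitz estimate of Lemma~\ref{lm:2} together with the uniform convergence $u^{N_i}\to\tilde u$ and the uniform boundedness of the $u^{N_i}$ yields $\mathcal L(u^{N_i}(x_{n_i},t_{m_i}),t_{m_i})\to\mathcal L\tilde u(\bx,\bt)$. Passing to the limit in the displayed inequality gives $\tilde u_{tt}(\bx,\bt)=\mathcal L\tilde u(\bx,\bt)$ at the arbitrary point $(\bx,\bt)$, i.e.\ \eqref{eq:perideq} holds, so $\tilde u$ solves \eqref{eq:model}.

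I expect the \emph{main obstacle} to be precisely this last step: transferring an estimate that a priori holds only at the collocation nodes to every point of $\bar\Omega$, while simultaneously commuting the nonlinear nonlocal operator $\mathcal L$ with the limit. The first difficulty is resolved by the density of the Gauss--Lobatto grids as $N_i\to\infty$ combined with the equicontinuity coming from the uniform $\mathcal C^2$ convergence; the second requires controlling the discrete FFT-convolution appearing in \eqref{eq:L-discret} against the continuous convolution in \eqref{eq:L}, for which the boundedness of $C$ and the locally Lipschitz character of the cubic terms---quantified in Lemma~\ref{lm:2}---are the essential ingredients.
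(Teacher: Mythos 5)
Your proof is correct and rests on the same two pillars as the paper's: the reconstruction of the limit $\tilde u$ from its Cauchy data via the Taylor/double-integral formula \eqref{eq:rel}, and the passage to the limit in the inequalities \eqref{eq:dismodel} using that the common bound $\eta_N=\frac{\sqrt N}{(2N-2)^2}W\bigl(\frac{1}{(2N-2)^2}\bigr)$ vanishes while the Gauss--Lobatto nodes become dense. The difference is in how the limit is taken: the paper argues by contradiction, fixing a spatial collocation node $\bx_n$ and a putative bad time $y$, and approximating $y$ by temporal nodes $\bt_{\ell_{N_i}}$; it therefore only concludes that the equation holds for $\bx=\bx_n$ and all $\bt$, and it passes the limit through $\mathcal L$ without comment. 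You instead argue directly at an arbitrary $(\bx,\bt)\in\bar\Omega$ by approximating with nodes in \emph{both} variables, and you explicitly justify the step $\mathcal L(u^{N_i}(x_{n_i},t_{m_i}),t_{m_i})\to\mathcal L\tilde u(\bx,\bt)$ via the uniform convergence, the uniform boundedness of the $u^{N_i}$, and the Lipschitz-type control of Lemma~\ref{lm:2}. This buys you a genuinely pointwise verification of \eqref{eq:perideq} on all of $\bar\Omega$ rather than only on the spatial grid, and it makes visible the one analytic ingredient the paper leaves implicit, namely the continuity of the nonlinear nonlocal operator under uniform limits on bounded sets. Your treatment of the initial conditions (density of nodes plus equicontinuity forcing $\vfi_1=u_0$, $\vfi_2=v_0$ everywhere) is likewise a slight strengthening of the paper's closing remark, which again only asserts the identities at the nodes.
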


\begin{proof}
Due to our assumptions, we have
\begin{equation}
\label{eq:rel}
\tilde{u}(\bx,\bt)=\vfi_1(\bx)+\vfi_2(\bx)(\bt+1)+\int_{-1}^{\bt}\int_{-1}^\tau \vfi_3(\bx,s)\,dsd\tau.
\end{equation}
By contradiction, let assume that there is a $n\in\{1,\dots,N\}$ such that $\tilde{u}(\bx_n,\cdot)$ does not satisfy~\eqref{eq:model}. Hence, there is a $y\in(-1,1)$ such that
\[
\tilde{u}_{tt}(\bx_n,y)-\mathcal{L}(\tilde{u}(\bx_x,y),\bt_m)\ne0.
\]
Since the Gauss-Lobatto nodes $\{\bt_m\}_{m=0}^N$ are dense in $[-1,1]$ for $N\to \infty$, there is a subsequence $\{\bar{t}_{\ell_{N_i}}\}_{i=1}^\infty$ such that $\lim_{i\to\infty} \bt_{\ell_{N_i}}=y$ and $0<\ell_{N_i}<N_i$.

We have
\begin{align*}
0&\ne\tilde{u}_{tt}(\bx_n,y)-\mathcal{L}(\tilde{u}(\bx_n,y),\bt_m)\\
&=\lim_{i\to\infty}\left(\tilde{u}_{tt}(\bx_n,\bt_{\ell_{N_i}})-\mathcal{L}(\tilde{u}(\bx_n,\bt_{\ell_{N_i}}),\bt_m)\right)\\
&\le\lim_{i\to\infty}\frac{\sqrt{N_i}}{(2N_i-2)^2}\ W\left(\frac{1}{(2N_i-2)^2}\right)=0.
\end{align*}
Therefore, $\tilde{u}(\bx,\bt)$ satisfies the model~\eqref{eq:model} for all $\bt\in[-1,1]$ and $\bx=\bx_n$, $n=1,\dots,N$.

Using the same argument, we can prove that $\tilde{u}(\bx_n,-1)=u_0(\bx_n)$ and $\tilde{u}_t(\bx_n,-1)=v(\bx_n)$ for $n=0,\dots,N$ and this completes the proof.
\end{proof}

\section{Numerical tests}
\label{sec:test}

In what follows, we make some simulations to validate the proposed method and to study the properties of the solution of the peridynamic model~\eqref{eq:model}. All our codes have been written in MATLAB using an Intel(R) Core(TM) i7-5500U CPU @ 2.40GHz computer.

\subsection{Validation of the two-dimensional Chebyshev scheme}
\label{sec:validation}

The validation of the spectral Chebyshev method is made by comparing the obtained approximated solution with the solution of a benchmark problem.

We consider a bar on the spatial domain $[-1,1]$ and we let the solution evolve in the time interval $[-1,1]$, so that the computational domain is given by $\bar{\Omega}=[-1,1]\times[-1,1]$.

We fix $N>0$ and discretize $\bar{\Omega}$ by using the Gauss-Lobatto mesh points $(x_n,t_m)=(\cos(n\pi/N),\cos(m\pi/N))$, for $n,\,m=0,\dots,N$. We take $u_0(x)=e^{-x^2}$, $v(x)=0$ as initial conditions for $t=-1$, $\delta=0.1$ as the size of the horizon and $C(x)=e^{-x^2}$ as micromodulus function.

Figure~\ref{fig:evolution} depicts the evolution of the solution, computed by our method, corresponding to the initial condition $u_0(x)=e^{-x^2}$ on the domain $\bar{\Omega}$. To evaluate the convergence of the fully-discrete scheme we use the relative error $E^m$, defined as
\[
E^m=\frac{\sum_{n=0}^N \left|u^N_{nm}-u^\ast(x_n,t_m)\right|^2}{\sum_{n=0}^N \left|u^N_{nm}\right|^2},
\]
where $u^\ast$ is the reference solution.

Table~\ref{tab:error} shows the relative error $E^{m}$ between the exact and the numerical solution for different values of the total number of mesh points $N$ at time $t_m=1$. We find that the rate of convergence of the scheme is compatible with the theoretical result.

\begin{figure}[tbp]%
\centering
\includegraphics[width=0.6\textwidth]{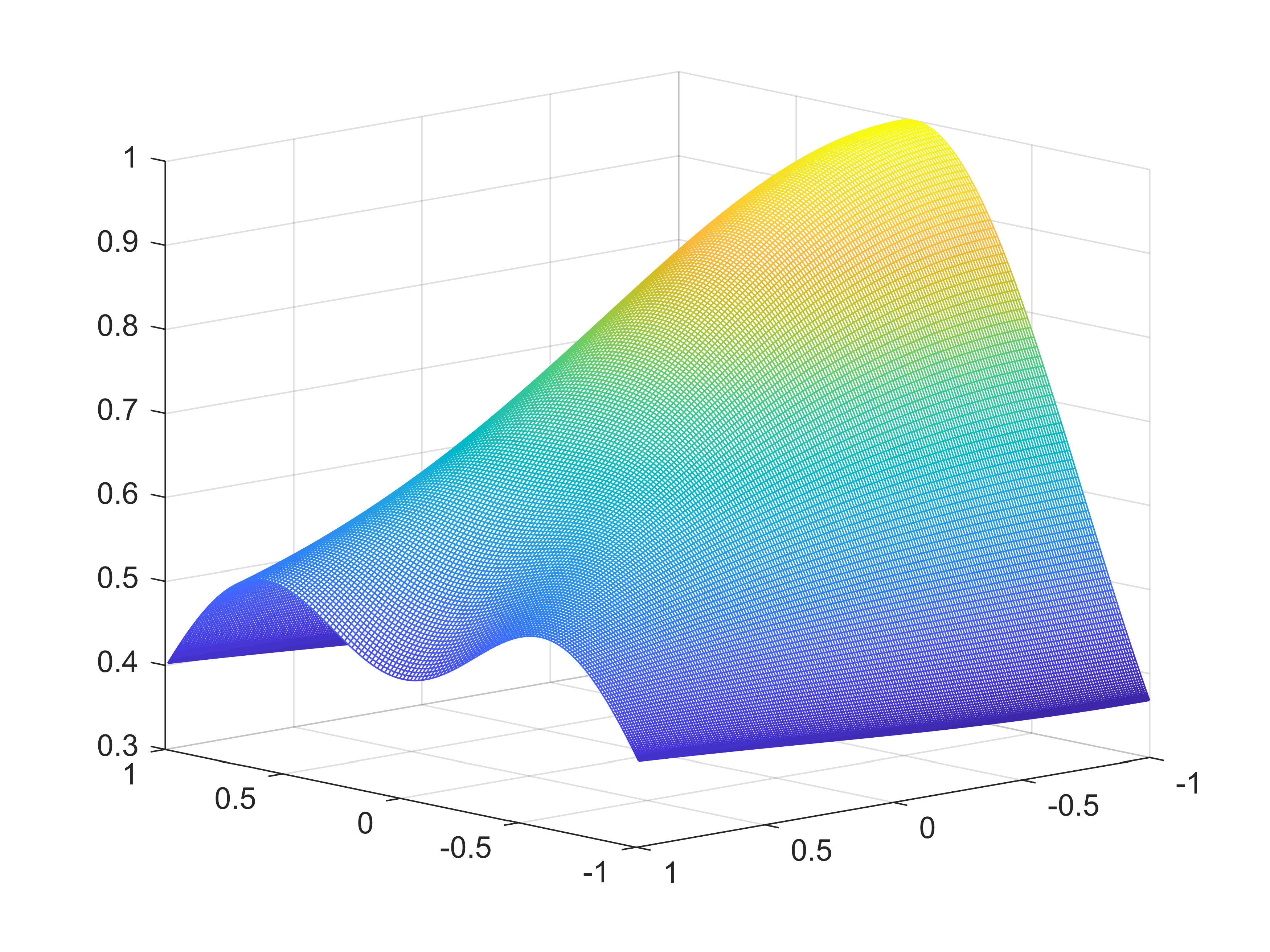}
\caption{With reference to Section~\ref{sec:validation}, the evolution of the solution of the problem. The parameters for the simulation are $\delta=0.1$, $N=1600$.}
\label{fig:evolution}
\end{figure}

\begin{table}%
\centering%
\renewcommand\arraystretch{1.3}
\begin{tabular}{ccc
}
\toprule
$N$& $E^{m}$ &convergence rate
\\
\midrule
$100$&$6.8594 \times 10^{-2}$&$-$
\\
$200$&$1.2508\times 10^{-2}$&$2.4552$
\\
$400$&$2.1895\times 10^{-3}$&$2.4847$
\\
$800$&$5.3782\times10^{-4}$&$2.3499$
\\
$1600$&$2.6481\times10^{-5}$&$2.7217$
\\
\bottomrule
\end{tabular}
\renewcommand\arraystretch{1}
\caption{With reference to Section~\ref{sec:validation}, the relative error between the exact solution and its numerical approximation, related to the initial condition $u_0(x)=e^{-x^2}$, at time $t_m=1$ as function of the number of discretization points.}
\label{tab:error}
\end{table}

Additionally, we analyze the performance of the method in terms of the computational cost required to complete the simulation.

We consider the same setting as before and we fix $u_0(x)=x/2$ as initial displacement. The solution of the problem is plotted in the left panel of Figure~\ref{fig:CPU}. In Table~\ref{tab:CPU} and in right panel of Figure~\ref{fig:CPU} we find that the method seems very competitive in terms of CPU cost. This is because the method exploits the properties of the Fast Fourier algorithm.

\begin{figure}
\centering
\begin{subfigure}[b]{.48\textwidth}
\includegraphics[width=\textwidth]{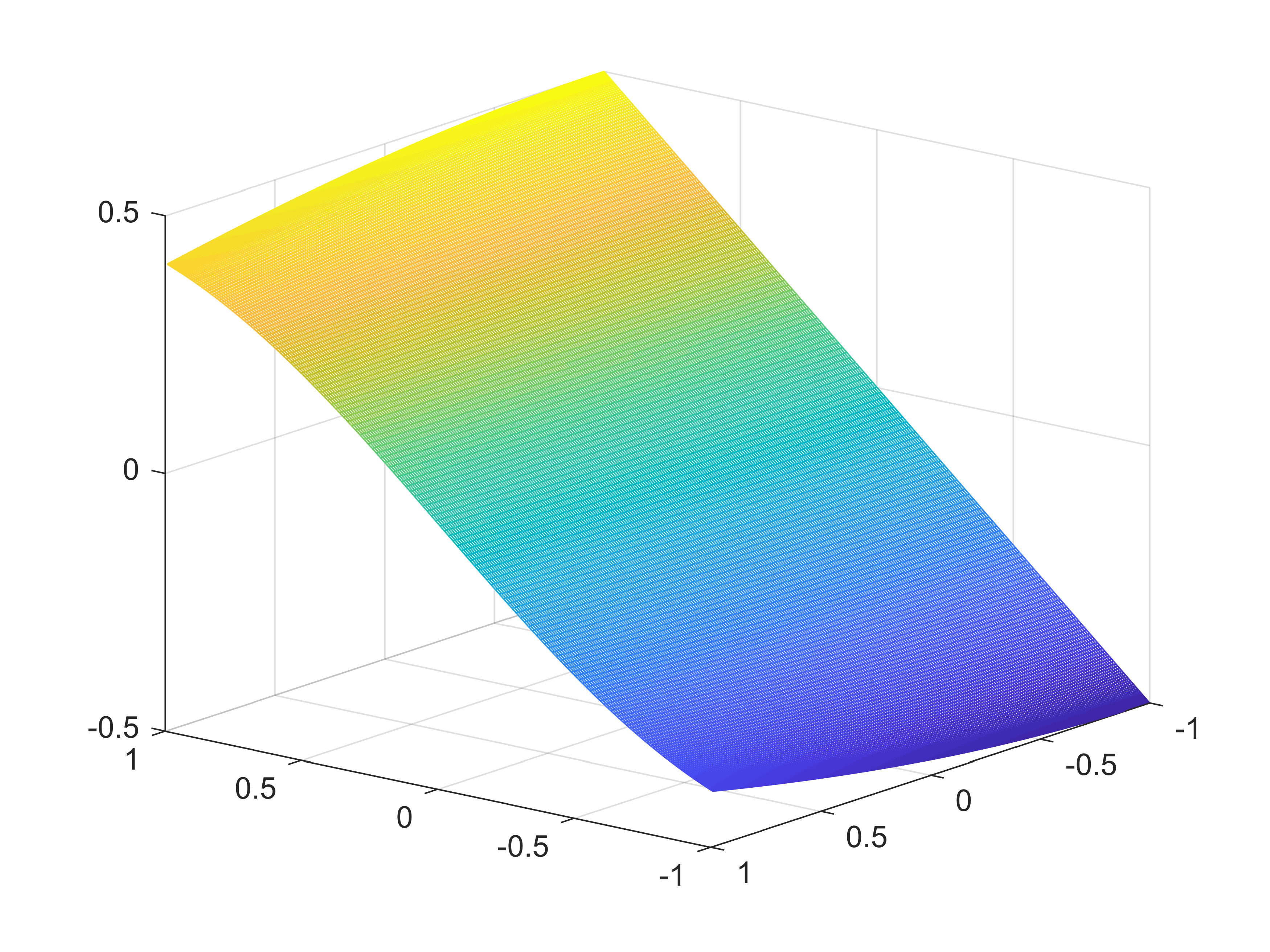}
\end{subfigure}
\begin{subfigure}[b]{.48\textwidth}
\includegraphics[width=\textwidth]{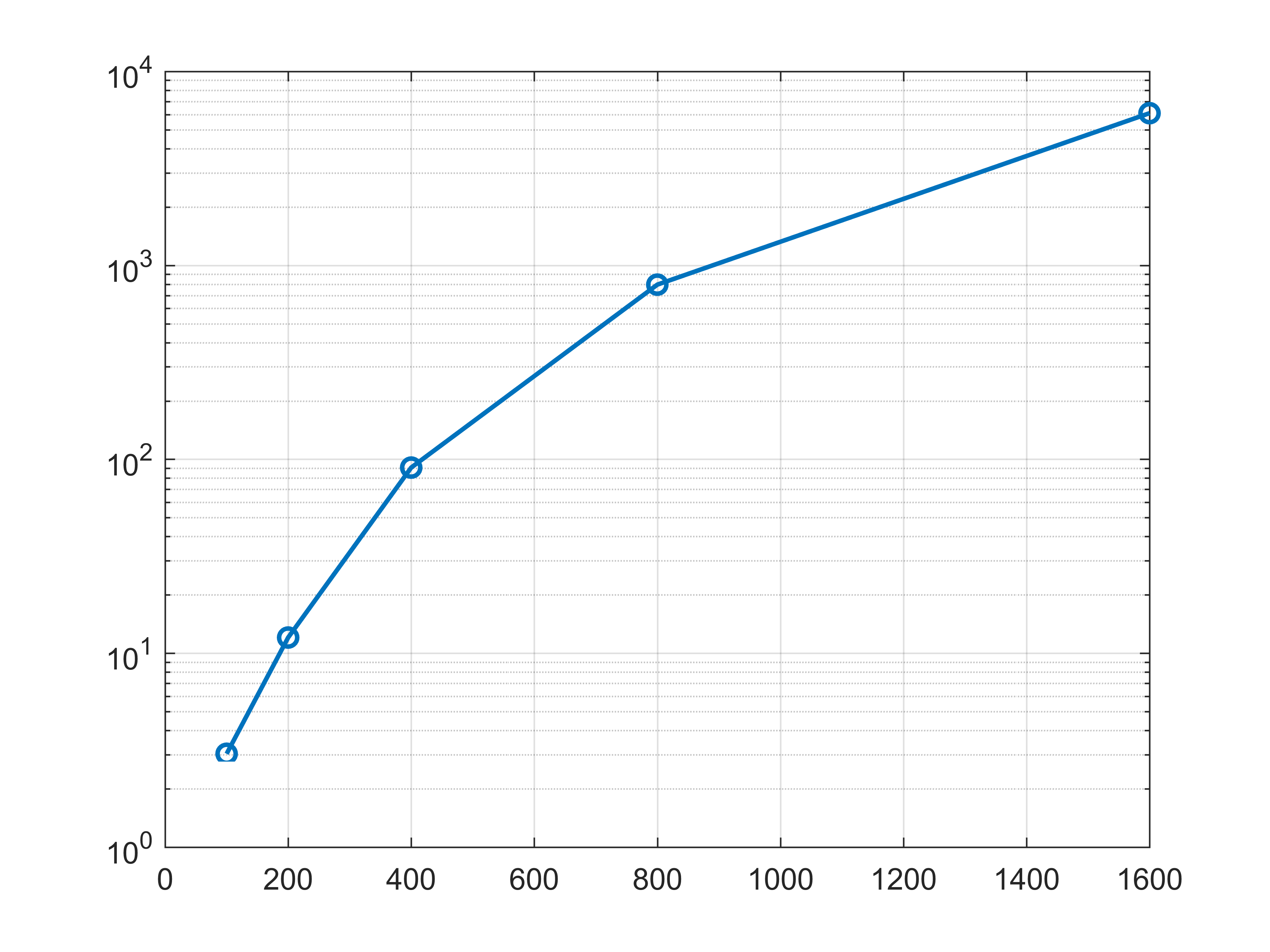}
\end{subfigure}
\caption{With reference to Section~\ref{sec:validation}. Left panel: the evolution of the solution corresponding the the initial displacement $u_0(x)=x/2$. Right panel: the behavior of the CPU cost depending of the total number of collocation points. For the simulation we fix $\delta=0.1$.}
\label{fig:CPU}
\end{figure}

\begin{table}%
\centering%

\renewcommand\arraystretch{1.3}
\begin{tabular}{cc
}
\toprule
$N$& CPU time [s]\\

\midrule
$100$&$3.0316 \times 10^{0}$
\\
$200$&$1.2051 \times 10^{1}$
\\
$400$&$9.0659\times 10^{1}$
\\
$800$&$7.9556\times 10^{2}$
\\
$1600$&$6.0924\times 10^{3}$
\\
\bottomrule
\end{tabular}
\renewcommand\arraystretch{1}
\caption{With reference to Section~\ref{sec:validation}, the execution time of the Chebyshev spectral method as function of the total number of collocation points $N$.}
\label{tab:CPU}
\end{table}

\subsection{A comparison between Chebyshev-Newmark-$\beta$ and the two-dimensional Chebyshev methods}
\label{sec:compare}

In~\cite{LPcheby}, the authors propose a spectral Chebyshev method for the spatial domain coupled with the Newmark-$\beta$ integrator to approximate the solution of the peridynamic model~\eqref{eq:model}. They showed good accuracy and performance in terms of CPU cost with respect to other spectral methods.

In this section, we make a comparison with our two-dimensional Chebyshev method and the Chebyshev-Newmark-$\beta$ method of~\cite{LPcheby}.

We clearly expect to find the same accuracy in space, as the spatial discretization method is practically the same. So, the aim of the comparison is to study the performance of the two methods in terms of CPU cost to complete the simulation.

We make some tests similar to those made in~\cite[Section 4.2]{LPcheby}. We work on $\bar{\Omega}=[-1,1]\times[-1,1]$, we take $u_0(x)=e^{-x^2}$ as initial displacement, $\delta=0.1$, $N=2000$ and $\beta=1/4$.

The solution of the problem at $t=1$ and its zoom on a small portion of the spatial domain is shown in Figure\ref{fig:zoom-compare}. As expected, we find a good agreement between the solution obtained with the two methods, and both of them are more accurate with respect to the penalized Fourier method. (We refer the reader to~\cite{LP} for a detailed description of the penalized Fourier spectral method).

\begin{figure}
\centering
\begin{subfigure}[b]{.48\textwidth}
\includegraphics[width=\textwidth]{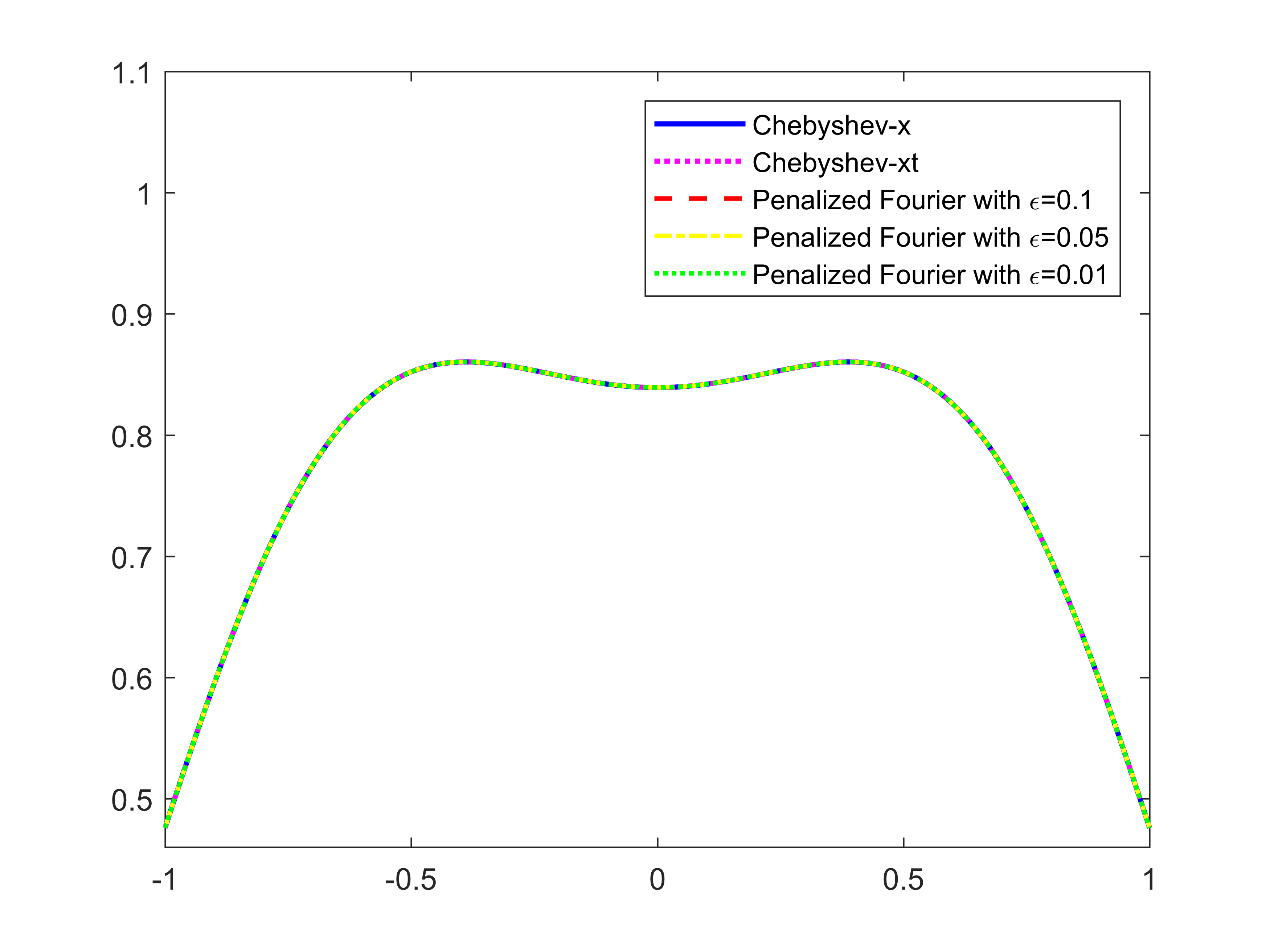}
\end{subfigure}
\begin{subfigure}[b]{.48\textwidth}
\includegraphics[width=\textwidth]{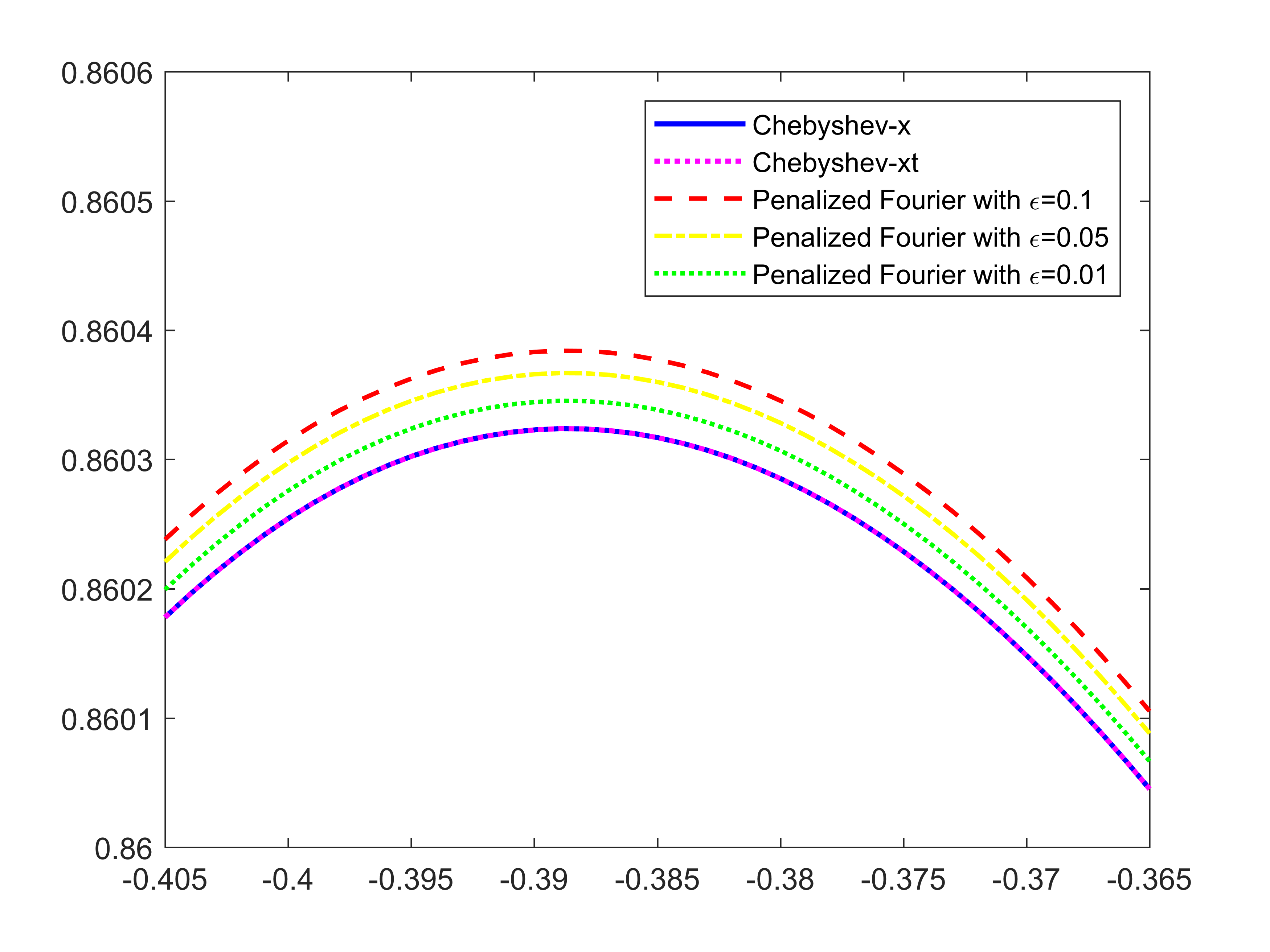}
\end{subfigure}
\caption{With reference to Section~\ref{sec:compare}. Left panel: the comparison between the solution obtained with different spectral methods at time $t=1$. Right panel: A zoom of the comparison in the spatial interval $[-0.405,-0.365]$. For the simulation we fix $\delta=0.1$, $N=2000$ and $\beta=1/4$.}
\label{fig:zoom-compare}
\end{figure}

Moreover, using the same setting, we analyze the methods in terms of time required to complete the simulation. When we deal with the Chebyshev-Newmark-$\beta$ method we have to vary both the space and the time step size, while this is implicitly done with the two-dimensional Chebyshev method if we fix $N>0$ as total number of grid points for space and time variables. In what follows, we fix $\Delta x=\Delta t= 2/N$. 

We summarize the results in Table~\ref{tab:CPUcompare}. We find a better result for the two-dimensional Chebyshev method, and this should depend on the fact that our method does not require any direct time integration as it is incorporated in the part of the algorithm in which we exploit the Fast Fourier Transform algorithm. Instead the computational cost to solve the algebraic system~\eqref{eq:modeldiscret} is compensated by the cost to solve the system derived by the implementation of the implicit Newmark-$\beta$ method.

\begin{table}%
\centering%
\renewcommand\arraystretch{1.3}
\begin{tabular}{ccc
}
\toprule
\multirow{2}*{$N$}& \multicolumn{2}{c}{CPU time [s]}  \\ \cmidrule(lr){2-3}
& Chebyshev-Newmark & 2D Chebyshev\\

\midrule
$128$&$7.8416 \times 10^{0}$&$3.6781 \times 10^{0}$
\\
$256$&$9.0142 \times 10^{1}$&$1.3984\times 10^{1}$
\\
$512$&$5.1257\times 10^{2}$&$1.0483\times 10^{2}$
\\
$1024$&$4.6573\times 10^{3}$&$9.1195\times 10^{2}$
\\
$2048$&$7.2561\times10^{4}$&$6.9763\times 10^{3}$
\\
\bottomrule
\end{tabular}
\renewcommand\arraystretch{1}
\caption{With reference to Section~\ref{sec:compare}, the execution time of the Chebyshev-Newmark-$\beta$ method and the two-dimensional Chebyshev method as function of the number of discretization points $N$, for $\beta=1/4$ and $\delta=0.1$.}
\label{tab:CPUcompare}
\end{table}

\subsection{The case of a discontinuous initial datum}
\label{sec:discont}

We now study the performance of Chebyshev spectral method applied to a problem with a discontinuous initial condition. We consider the same setting as in the previous sections and we take $u_0(x)=\chi_{[0,1]}(x)$ as initial displacement. We plot the dynamic of the solution in Figure~\ref{fig:discont}, while the error study is summarized in Table~\ref{tab:discont}.

We can notice the lost of one order of convergence due to the presence of a singularity in the initial displacement. This is in accordance with the results in~\cite{LPcheby,LP}.

\begin{figure}[tbp]%
\centering
\includegraphics[width=0.6\textwidth]{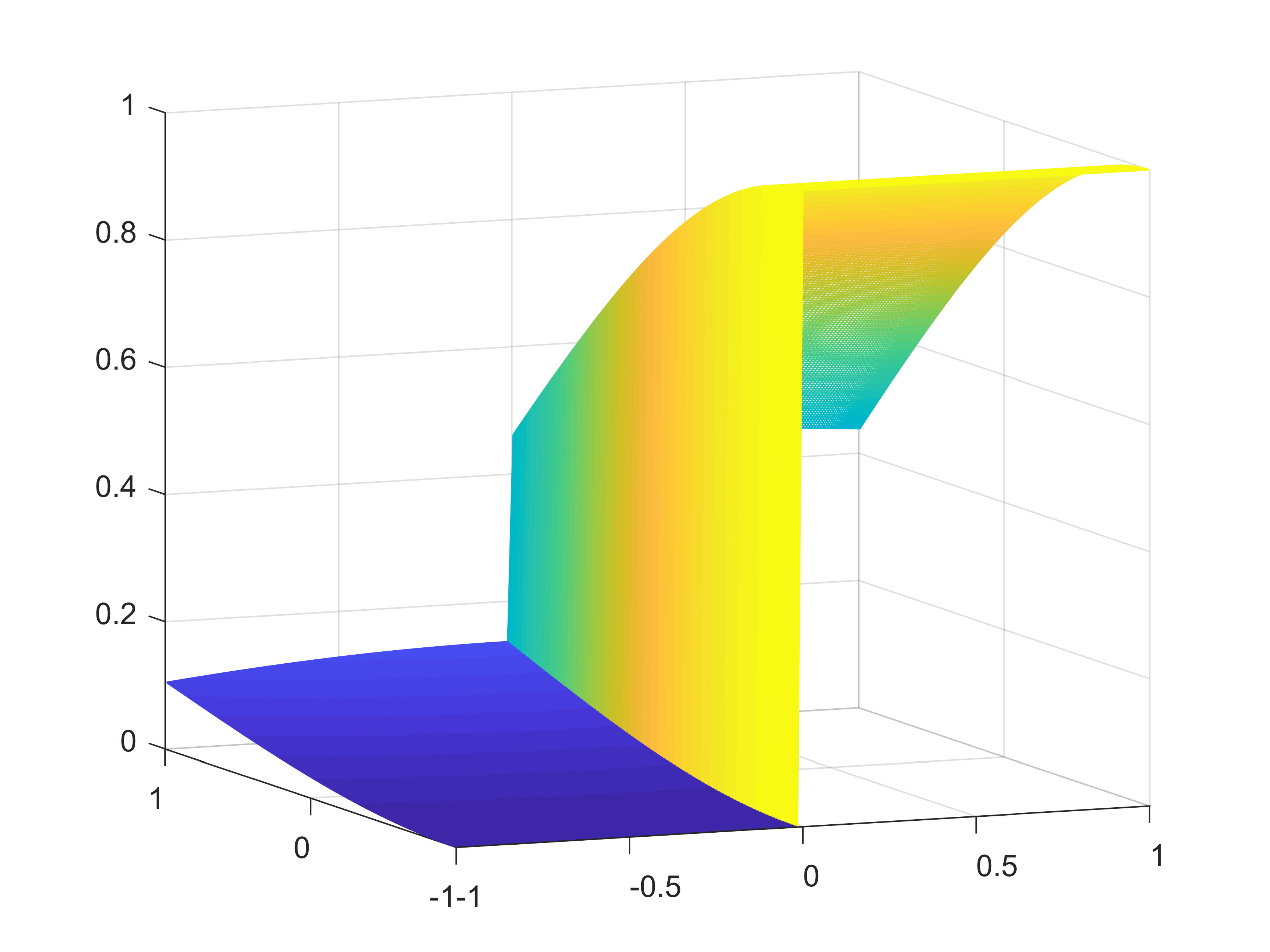}
\caption{With reference to Section~\ref{sec:discont}, the evolution of the solution corresponding to a singular initial displacement. The parameters for the simulation are $\delta=0.1$ and $N=1000$.}
\label{fig:discont}
\end{figure}

\begin{table}%
\centering%
\renewcommand\arraystretch{1.3}
\begin{tabular}{ccc
}
\toprule
$N$& $E^{m}$ &convergence rate
\\
\midrule
$100$&$4.1382 \times 10^{-1}$&$-$
\\
$200$&$1.4162\times 10^{-1}$&$1.5470$
\\
$400$&$6.9453\times 10^{-2}$&$1.2874$
\\
$800$&$3.6519\times10^{-2}$&$1.1535$
\\
$1600$&$2.6514\times10^{-3}$&$1.6528$
\\
\bottomrule
\end{tabular}
\renewcommand\arraystretch{1}
\caption{With reference to Section~\ref{sec:discont}, the relative error and the convergence rate, related to the initial displacement $u_0(x)=\chi_{[0,1]}(x)$, at time $t_m=1$ as function of the number of discretization points.}
\label{tab:discont}
\end{table}

\section{Conclusion and future works}
\label{sec:concl}

In this work, we propose a two-dimensional fast-convolution spectral method based on the implementation of Chebyshev polynomials to approximate the solution of a one-dimensional nonlinear peridynamic model having a power-type nonlinearity in the pairwise force function. The method results very accurate as it can exploit the benefits of the Fast Fourier Transform algorithm. Moreover, the idea to deal the problem in a bi-dimensional domain allows us to obtain the same accuracy in both space and time variables without requiring the implementation of a numerical scheme to integrate in time the discrete method. We prove the convergence of the proposed method and perform some simulations to validate the Chebyshev scheme and to study the properties of the solutions. 

In future, we plan to extend the method to higher dimensional problems and we aim to couple the approach to techniques based on mimetic and virtual element methods (see for example~\cite{Beirao_Lopez_Vacca_2017}).

\section{Declaration statement}
The authors consent for the publication.

The datasets used and/or analysed during the current study are available from the corresponding author on reasonable request.

The authors declare that they have no competing interests.

The authors declare that they gave their individual contributions in every section of the manuscript. All authors read and approved the final manuscript.

\section*{Acknowledgements}
This paper has been supported by GNCS of Istituto Nazionale di Alta Matematica, by PRIN 2017 ``Discontinuous dynamical systems: theory, numerics and applications'' and by Regione Puglia, ``Programma POR Puglia 2014/2020-Asse X-Azione 10.4 Research for Innovation-REFIN - (D1AB726C)''.


\bibliographystyle{plain}
\bibliography{biblioPeri2}

\end{document}